\newtheorem{theorem}{Theorem}[section]
\newtheorem{corollary}{Corollary}[section]
\theoremstyle{definition}
\numberwithin{equation}{section}
\newcommand\blfootnote[1]{\begingroup\renewcommand\thefootnote{}\footnote{#1}\addtocounter{footnote}{-1}\endgroup}
\begin{document}

\title{
{\bf\Large Multiple positive solutions \break of a Sturm-Liouville boundary value problem \break with conflicting nonlinearities}
\footnote{Work supported by the Grup\-po Na\-zio\-na\-le per l'Anali\-si Ma\-te\-ma\-ti\-ca, la Pro\-ba\-bi\-li\-t\`{a} e le lo\-ro
Appli\-ca\-zio\-ni (GNAMPA) of the Isti\-tu\-to Na\-zio\-na\-le di Al\-ta Ma\-te\-ma\-ti\-ca (INdAM).
Progetto di Ricerca 2016: ``Problemi differenziali non lineari: esistenza, molteplicit\`{a} e propriet\`{a} qualitative delle soluzioni''.}
}
\author{{\bf\large Guglielmo Feltrin}
\vspace{1mm}\\
{\it\small SISSA - International School for Advanced Studies}\\
{\it\small via Bonomea 265}, {\it\small 34136 Trieste, Italy}\\
{\it\small e-mail: guglielmo.feltrin@sissa.it}\vspace{1mm}
}

\date{}

\maketitle

\vspace{-2mm}

\begin{abstract}
\noindent
We study the second order nonlinear differential equation
\begin{equation*}
u''+ \sum_{i=1}^{m} \alpha_{i} a_{i}(x)g_{i}(u) - \sum_{j=0}^{m+1} \beta_{j} b_{j}(x)k_{j}(u) = 0,
\end{equation*}
where $\alpha_{i},\beta_{j}>0$, $a_{i}(x), b_{j}(x)$ are non-negative Lebesgue integrable functions defined in $\mathopen{[}0,L\mathclose{]}$, and
the nonlinearities $g_{i}(s), k_{j}(s)$ are continuous, positive and satisfy suitable growth conditions, as to cover the classical superlinear equation $u''+a(x)u^{p}=0$, with $p>1$.
When the positive parameters $\beta_{j}$ are sufficiently large, we prove the existence of at least $2^{m}-1$
positive solutions for the Sturm-Liouville boundary value problems associated with the equation.
The proof is based on the Leray-Schauder topological degree for locally compact operators on open and possibly unbounded sets.
Finally, we deal with radially symmetric positive solutions for the Dirichlet problems associated with elliptic PDEs.
\blfootnote{\textit{AMS Subject Classification:} 34B15, 34B18, 47H11.}
\blfootnote{\textit{Keywords:} superlinear indefinite problems, positive solutions, Sturm-Liouville boundary conditions,
multiplicity results, Leray-Schauder topological degree.}
\end{abstract}

\section{Introduction}\label{section-1}

In this paper we study positive solutions to nonlinear second order ODEs 
with indefinite weight and we deal with Sturm-Liouville boundary conditions.
To describe our results, throughout the introduction we focus our attention to the equation
\begin{equation}\label{eq-intro}
u''+ a(x)g(u) - \mu \, b(x)k(u) = 0
\end{equation}
defined on the nontrivial compact interval $\mathopen{[}0,L\mathclose{]}$.
Let $\mathbb{R}^{+}:=\mathopen{[}0,+\infty\mathclose{[}$ be the set of non-negative real numbers.
We assume that $\mu>0$ is a real parameter, $a,b\colon \mathopen{[}0,L\mathclose{]} \to \mathbb{R}^{+}$ are measurable functions 
and $g,k\colon \mathbb{R}^{+} \to \mathbb{R}^{+}$ are continuous functions such that
\begin{equation*}
\begin{aligned}
& g(0)=0, &\quad g(s)>0, \quad \text{for } s>0, \\
& k(0)=0, &\quad k(s)>0, \quad \text{for } s>0.
\end{aligned}
\leqno{(i_{1})}
\end{equation*}
Referring to \cite{Ru-98}, we can say that equation \eqref{eq-intro} exhibits \textit{conflicting nonlinearities}.
Moreover, following a standard terminology (cf.~\cite{BoFeZa-17tams}), we can look at \eqref{eq-intro} as 
an \textit{indefinite} equation, meaning that the sign of the weight is non-constant.

Our main goal is to provide multiplicity results of positive solutions to equation \eqref{eq-intro} 
together with the Sturm-Liouville boundary conditions, namely conditions of the form
\begin{equation}\label{BC-intro}
\begin{cases}
\, \alpha u(0) - \beta u'(0) = 0 \\
\, \gamma u(L) + \delta u'(L) = 0,
\end{cases}
\end{equation}
where $\alpha, \beta, \gamma, \delta \geq 0$ with $\gamma\beta + \alpha\gamma +\alpha\delta > 0$.
We notice that for $\alpha=\gamma=1$ and $\beta=\delta=0$, we obtain the Dirichlet boundary conditions.

A \textit{solution} of \eqref{eq-intro} is an absolutely continuous function $u\colon\mathopen{[}0,L\mathclose{]}\to {\mathbb{R}}^{+}$
such that its derivative $u'(x)$ is absolutely continuous and $u(x)$ satisfies \eqref{eq-intro} for a.e. $x\in\mathopen{[}0,L\mathclose{]}$.
We look for \textit{positive solutions} of boundary value problem \eqref{eq-intro}-\eqref{BC-intro}, that is solutions $u(x)$ of \eqref{eq-intro} satisfying \eqref{BC-intro} and 
such that $u(x)>0$ for every $x\in\mathopen{]}0,L\mathclose{[}$.

\medskip

Starting from the Seventies, these types of problems have received a remarkable attention in the
research area of nonlinear differential equations.
One of the early work was due to Anderson (cf.~\cite{An-71}) who has proved that the equation 
\begin{equation*}
-\Delta \,u = u^{3} - \mu u^{5} -u  \quad \text{in } \mathbb{R}^{N}
\end{equation*}
has a solution if $0<\mu<3/16$, while there are no solutions for $\mu>3/16$.

Other two relevant contributions to the autonomous case are \cite{AmBrCe-94,BeLi-83}.
In \cite{BeLi-83} Berestycki and Lions have analyzed the more general equation
\begin{equation*}
-\Delta \,u = \nu |u|^{p-1}u - \mu |u|^{q-1}u -\lambda u \quad \text{in } \mathbb{R}^{N},
\end{equation*}
where $N\geq 3$, $\nu,\mu,\lambda>0$ and $1<q<p<(N+2)/(N-2)$, and they
proved existence and non-existence results in dependence of the parameter $\mu>0$.
In \cite{AmBrCe-94} Ambrosetti, Brezis and Cerami proved that there is a positive solution of
\begin{equation*}
\begin{cases}
\, -\Delta \,u = \lambda u^{q} + u^{p} & \text{in } \Omega \\
\, u = 0 & \text{on } \partial\Omega,
\end{cases}
\end{equation*}
with $0<q<1<p$, for $\lambda>0$ small enough and no solution for $\lambda$ large.

We refer to \cite{Ru-98} for a further result in this direction and for a more complete presentation and bibliography on the subject.

\medskip

Our research work has been motivated by the papers \cite{AlTa-96,GiGo-09prse}, where non-autonomous
differential equations on bounded domains are taken into account.
The boundedness of the domain enables the authors to deal with more general equations (whit respect to those considered in \cite{AmBrCe-94,An-71,BeLi-83}) and, in particular,
to consider non-negative weight functions in place of the positive coefficients in front of the nonlinearities.

In \cite{AlTa-96} Alama and Tarantello studied positive solutions of the Dirichlet boundary value problem
\begin{equation*}
\begin{cases}
\, -\Delta \,u = \lambda u +k(x)u^{q} - h(x) u^{p} & \text{in } \Omega \\
\, u = 0 & \text{on } \partial\Omega,
\end{cases}
\end{equation*}
where $\Omega\subseteq\mathbb{R}^{N}$ (with $N\geq 3$) is an open bounded set with smooth boundary,
the functions $h,k\in L^{1}(\Omega)$ are non-negative and $1<q<p$.
They proved existence, non-existence and multiplicity results depending on $\lambda\in\mathbb{R}$
and according to the properties of the ratio $k^{p-1}/h^{q-1}$.

In \cite{GiGo-09prse} Gir\~{a}o and Gomes dealt with nodal solutions to
\begin{equation*}
\begin{cases}
\, -\Delta \,u = a^{+}(x) \bigl{(}\lambda u + f(x,u)\bigr{)} - \mu a^{+}(x) g(x,u) & \text{in } \Omega \\
\, u = 0 & \text{on } \partial\Omega,
\end{cases}
\end{equation*}
where $\Omega\subseteq\mathbb{R}^{N}$ (with $N\geq 1$) is an open bounded set with smooth boundary.
They proved existence of nodal solutions for $\mu>0$ sufficiently large.

\medskip

The main goal of this paper is to present a multiplicity result for positive solutions to \eqref{eq-intro}-\eqref{BC-intro}
in dependence of the number of the intervals where $a(x)>0$ and thus giving a contribution to \cite{AlTa-96,GiGo-09prse}.
In order to explain our achievement, we now introduce it in a slightly easier framework.

Let $a,b\colon \mathopen{[}0,L\mathclose{]} \to \mathbb{R}^{+}$ be continuous functions such that
\begin{itemize}
\item [$(i_{2})$] there exist two zeros $\tau,\sigma$ with $0<\tau<\sigma<L$ such that
\begin{equation*}
\begin{aligned}
&a(x)>0 &\text{on }& \mathopen{]}0,\tau\mathclose{[}\cup\mathopen{]}\sigma,L\mathclose{[}, \quad & &a(x)\equiv 0  &\text{on }& \mathopen{[}\tau,\sigma\mathclose{]},\\
&b(x)>0 &\text{on }& \mathopen{]}\tau,\sigma\mathclose{[},& &b(x)\equiv 0 &\text{on }& \mathopen{[}0,\tau\mathclose{]}\cup\mathopen{[}\sigma,L\mathclose{]}.
\end{aligned}
\end{equation*}
\end{itemize}

Our main multiplicity result is the following. See Figure~\ref{fig-01} for a numerical example.

\begin{theorem}\label{th-intro}
Let $a,b\colon \mathopen{[}0,L\mathclose{]} \to \mathbb{R}^{+}$ be continuous functions satisfying $(i_{2})$.
Let $g,k\colon \mathbb{R}^{+} \to \mathbb{R}^{+}$ be continuous functions satisfying $(i_{1})$.
Moreover, assume that
\begin{equation*}
\limsup_{s\to0^{+}}\dfrac{g(s)}{s}=0, \qquad \liminf_{s\to +\infty}\dfrac{g(s)}{s}=+\infty,
\end{equation*}
and
\begin{equation*}
\limsup_{s\to0^{+}}\dfrac{k(s)}{s}<+\infty.
\end{equation*}
Then there exists $\mu^{*}>0$ such that for every $\mu >\mu^{*}$ the boundary value problem \eqref{eq-intro}-\eqref{BC-intro}
has at least $3$ positive solutions.
\end{theorem}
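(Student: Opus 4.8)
The plan is to translate the search for positive solutions into a fixed-point problem and to count solutions by means of the Leray–Schauder degree, in the spirit of the topological approach to superlinear indefinite equations. First I would reformulate \eqref{eq-intro}--\eqref{BC-intro} as the fixed-point equation $u=\mathcal{T}u$ in $X=\mathcal{C}(\mathopen{[}0,L\mathclose{]})$, where
\[
\mathcal{T}u(x)=\int_{0}^{L} G(x,s)\bigl{(}a(s)\,g(u(s))-\mu\,b(s)\,k(u(s))\bigr{)}\,ds
\]
and $G\geq 0$ is the Green function of $-u''=0$ under \eqref{BC-intro}, its non-negativity being granted by $\gamma\beta+\alpha\gamma+\alpha\delta>0$. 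Extending $g,k$ by $g(s)=k(s)=0$ for $s\leq 0$, a maximum-principle argument, which uses that by $(i_{2})$ the weight $b$ is supported on $\mathopen{[}\tau,\sigma\mathclose{]}$ where $a$ vanishes, confines every fixed point to the cone of non-negative functions, so that by $(i_{1})$ each nontrivial fixed point is a positive solution. The operator $\mathcal{T}$ is completely continuous, and the whole problem is thus reduced to detecting nontrivial zeros of $I-\mathcal{T}$.

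Second, I would establish the two basic degree computations. Using $\limsup_{s\to 0^{+}}g(s)/s=0$ together with $\limsup_{s\to 0^{+}}k(s)/s<+\infty$, the operator $\mathcal{T}$ is governed near the origin by the linear dissipative problem $-u''+\mu\,b(x)c(x)u=0$ with $c\geq 0$, whose only small fixed point is the trivial one; hence $\deg(I-\mathcal{T},B_{r},0)=1$ for every sufficiently small $r>0$. On the other hand, the superlinearity at infinity $\liminf_{s\to+\infty}g(s)/s=+\infty$, acting through the positivity of $a$ on $\mathopen{]}0,\tau\mathclose{[}\cup\mathopen{]}\sigma,L\mathclose{[}$, yields an a priori bound $R>0$ for all positive solutions and, via a homotopy with an additional positive forcing term that admits no solution once the forcing is large, gives $\deg(I-\mathcal{T},B_{R},0)=0$. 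Already this produces one positive solution in the annulus $B_{R}\setminus\overline{B_{r}}$; the task is to refine this into three.

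Third --- and this is the core of the argument --- I would exploit the interval structure $(i_{2})$ to decouple the behaviour on $I_{1}=\mathopen{]}0,\tau\mathclose{[}$ and $I_{2}=\mathopen{]}\sigma,L\mathclose{[}$. On $I_{1}\cup I_{2}$ a positive solution is concave, since $u''=-a(x)g(u)\leq 0$, while on $\mathopen{[}\tau,\sigma\mathclose{]}$ it is convex with $u''=\mu\,b(x)k(u)$. For $\mu$ large the central convexity is so strong that any solution with $\|u\|_{\infty}\leq R$ is forced to be uniformly small on $\mathopen{[}\tau,\sigma\mathclose{]}$: this is the barrier that makes the configurations on $I_{1}$ and on $I_{2}$ independent. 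Introducing, for $0<r<R$, the open sets determined by whether $\max_{I_{i}}u$ lies below $r$ (``small'') or strictly between $r$ and $R$ (``large'') for each $i\in\{1,2\}$, I would compute the degree on each of the four resulting product-type regions by excision and additivity, checking that no fixed point meets the separating levels $\max_{I_{i}}u\in\{r,R\}$. The all-small region carries the trivial solution and degree $1$, whereas each of the three remaining regions, corresponding to the nonempty subsets $\{1\}$, $\{2\}$ and $\{1,2\}$ of active intervals, is shown to have nonzero degree. Since these regions are pairwise disjoint, this yields at least $2^{2}-1=3$ distinct positive solutions.

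I expect the main obstacle to be exactly the decoupling step and the determination of the threshold $\mu^{*}$. One must quantify how large $\mu$ must be so that the central convexity both guarantees the uniform smallness on $\mathopen{[}\tau,\sigma\mathclose{]}$ and prevents fixed points from lying on the boundary levels $\max_{I_{i}}u=r$ during the homotopies used in the degree computation; this is where the three growth hypotheses on $g,k$ and the bound $R$ must be balanced against $\mu$. The remaining ingredients --- complete continuity of $\mathcal{T}$, non-negativity of fixed points, and the combinatorial additivity of the degree over the $2^{m}$ configurations (here $m=2$) --- are comparatively routine once the barrier estimate is in place.
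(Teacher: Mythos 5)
Your overall architecture is the paper's: Green-function reformulation, complete continuity, degree $1$ on small balls and $0$ on large balls via a forcing homotopy, and then the four product-type regions indexed by subsets of $\{1,2\}$ with degrees $1$ and $(-1)^{\#\mathcal{I}}$ computed by excision/additivity. The one place where the two diverge is precisely the step you defer to the end, and there is a genuine gap there: you never say \emph{how} large $\mu$ excludes fixed points on the separating level $\max_{I_i}u=r$, and the mechanism you do offer --- that strong central convexity forces every solution with $\|u\|_\infty\le R$ to be uniformly small on $\mathopen{[}\tau,\sigma\mathclose{]}$ --- is not the statement the degree argument needs and would not by itself close the proof. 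The danger is a solution that sits at level $r$ on, say, $I_1=\mathopen{[}0,\tau\mathclose{]}$ but decays to (essentially) zero at $x=\tau$: since $k(0)=0$, such a solution feels nothing from the term $\mu\,b(x)k(u)$ on $\mathopen{[}\tau,\sigma\mathclose{]}$, no matter how large $\mu$ is, so ``central convexity'' alone cannot rule it out.

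The missing ingredient, which is where the hypothesis $\limsup_{s\to0^+}g(s)/s=0$ actually enters, is an endpoint lower bound: if $u$ is a non-negative solution with $\max_{I_i}u=r$ and $r$ is chosen so small that $g(s)/s<(\lambda_1^{i}-\rho)$ on $\mathopen{]}0,r\mathclose{]}$ (with $\lambda_1^{i}$ the first eigenvalue of $\varphi''+\lambda a(x)\varphi=0$ on $I_i$ with the appropriate boundary conditions), then testing the equation against the first eigenfunction $\varphi_i$ on $I_i$ and using the concavity estimate $u(x)\ge \frac{r}{|I_i|}\min\{x-\sigma_i,\tau_i-x\}$ yields $u(\sigma_i)+u(\tau_i)\ge c_i r$ for a constant $c_i>0$ independent of $u$ and $r$. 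Combined with the bound $|u'|\le M_i$ on $I_i$ (again from $0\le u\le r$ and concavity), this guarantees that $u$ enters the adjacent interval $J$ with a value $\ge c_i r/2$ and slope $\ge -M_i$, and stays $\ge c_i r/4$ on a fixed subinterval of $J$; only then does convexity, i.e. $u''=\mu\,b(x)k(u)$ with $k\ge\nu>0$ there, force $u$ above the a priori bound $R$ once $\mu$ exceeds an explicit threshold, producing the contradiction. This is exactly condition $(\bigstar)$ of the underlying multiplicity theorem, it must be verified not just for $f$ but for the whole class of comparison nonlinearities $h\ge f$ used in the homotopies, and it is the step that determines $\mu^{*}$; without it your plan stops one lemma short of a proof.
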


\begin{figure}[h!]
\centering
\begin{tikzpicture}[x=40pt,y=25pt]
\draw (-0.4,0) -- (4,0);
\draw (0,-1.4) -- (0,1.5);
\draw (3.8,0) node[anchor=south] {$x$};
\draw [line width=1pt, color=red] (0,0) sin (0.5,1) cos (1,0) sin (1.5,-1) cos (2,0) sin (2.5,1) cos (3,0);
\draw (0,0) node[anchor=north east] {$0$};
\draw (1.05,-0.1) node[anchor=north east] {$\tau$};
\draw (1.95,-0.1) node[anchor=north west] {$\sigma$};
\draw (3,-0.1) node[anchor=north] {$L$};
\draw (1,-0.1) -- (1,0);
\draw (2,-0.1) -- (2,0);
\draw (3,-0.1) -- (3,0);
\end{tikzpicture}
\vspace*{10pt}
\\
\includegraphics[width=0.305\textwidth]{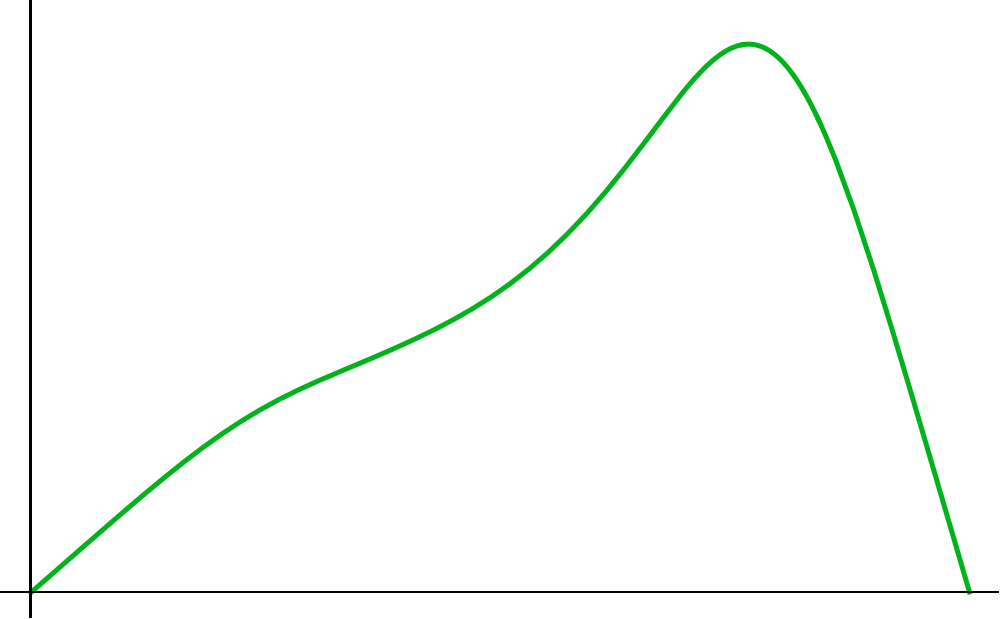} \quad
\includegraphics[width=0.305\textwidth]{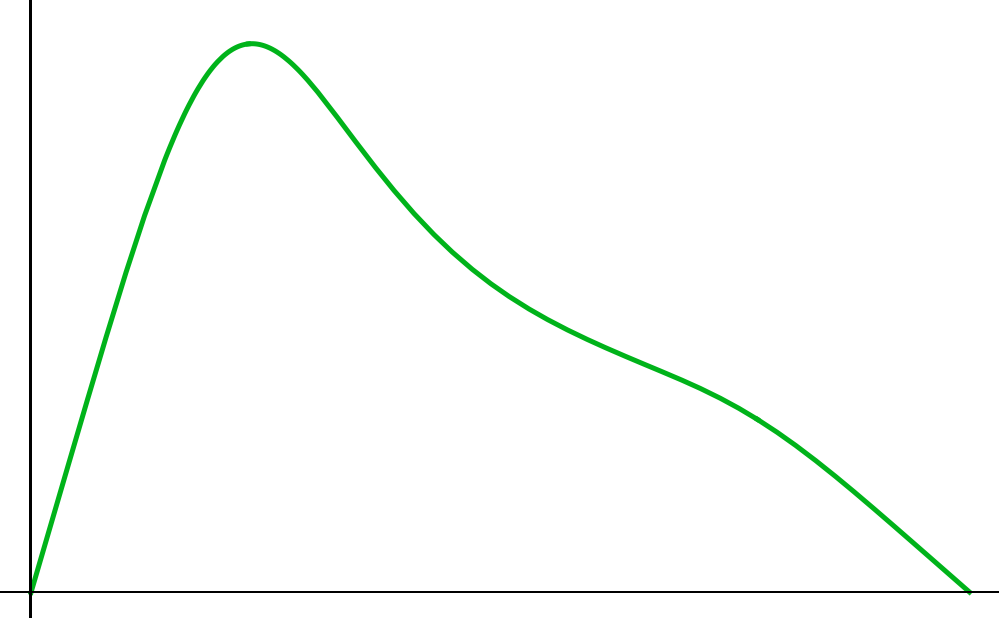} \quad
\includegraphics[width=0.305\textwidth]{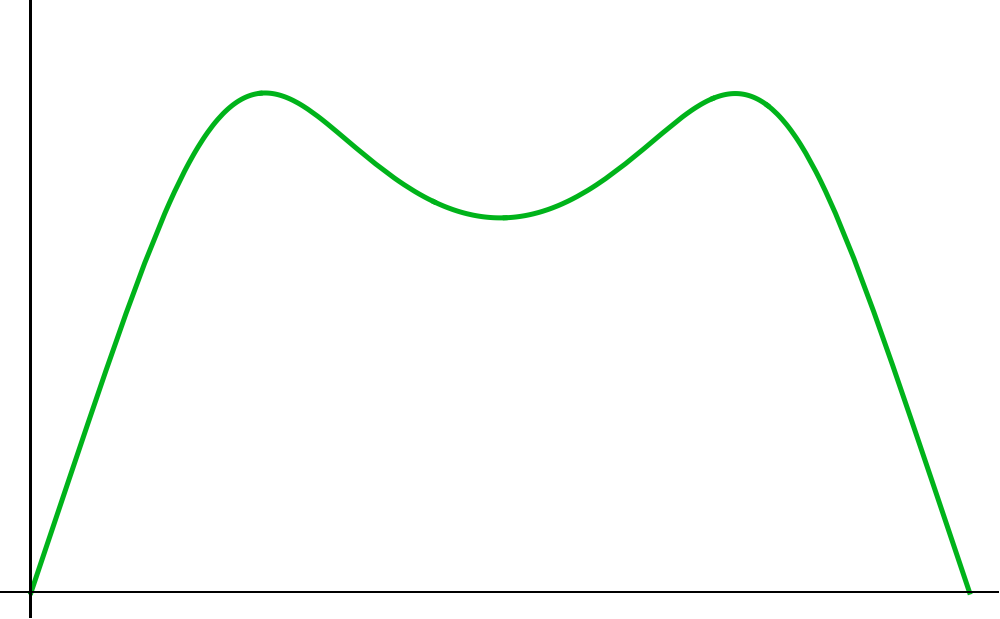}
\caption{\small{The figure shows an example of $3$ positive solutions to the Dirichlet problem associated with \protect\eqref{eq-intro} on $\mathopen{[}0,3\pi\mathclose{]}$,
where $\tau=\pi$, $\sigma=2\pi$, $L=3\pi$, $a(x)=\sin^{+}(x)$, $b(x)=\sin^{-}(x)$ (as in the upper part of the figure), $g(s)=s^{2}$, $k(s)=s^{3}$ (for $s>0$).
For $\mu=1$, Theorem~\ref{th-intro} ensures the existence of $3$ positive solutions, whose graphs are located in the lower part of the figure.
}}
\label{fig-01}
\end{figure}

We notice that, under the hypotheses of Theorem~\ref{th-intro}, the function $g(s)$ is \textit{superlinear}, thus covering the classical case $g(s)=s^{p}$ with $p>1$.
On the other hand, we do not impose any growth condition on $k(s)$. Hence, the case considered in \cite{AlTa-96} is clearly included in our setting
(cf.~Section~\ref{section-6} for other remarks in this direction).

The proof of Theorem~\ref{th-intro} and its variants is based on the Leray-Schauder topological degree.
Our work benefits from a topological approach which has been first employed in \cite{FeZa-15jde} by Feltrin and Zanolin
to produce multiple positive solutions to the Dirichlet problem
\begin{equation}\label{BVP-jde}
\begin{cases}
\, u'' + a(x)g(u) = 0 \\
\, u(0) = u(L) = 0.
\end{cases}
\end{equation}
The main idea is to transform our Sturm-Liouville boundary value problem \eqref{eq-intro}-\eqref{BC-intro}
into an equivalent fixed point problem in the Banach space $X:=\mathcal{C}(\mathopen{[}0,L\mathclose{]})$
\begin{equation*}
u = \Phi u, \quad u\in X,
\end{equation*}
where $\Phi\colon X\to X$ is a completely continuous operator. 
As a crucial step in the proof, we define $3$ open pairwise disjoint subsets of $X$ not containing the trivial solution
and we prove that the degree is well defined and different from zero on these sets.
This fact guarantees, via a maximum principle, that there exist at least $3$ positive solutions of \eqref{eq-intro}-\eqref{BC-intro}.

\medskip

As remarked above, Theorem~\ref{th-intro} is a special case of the main result of the present paper (cf.~Theorem~\ref{main-theorem}),
where we deal with more general (Lebesgue integrable) coefficients $a(x)$ and $b(x)$ and weaker growth conditions on $g_{i}(s)$. Roughly speaking, we consider a weight function $a\colon \mathopen{[}0,L\mathclose{]} \to \mathbb{R}^{+}$
(belonging to the ``positive'' part of the nonlinearity) which is positive on $m$ intervals, so $a(x)$ has $m$ positive humps.
In this framework we prove the existence of $2^{m}-1$ positive solutions of \eqref{eq-intro}-\eqref{BC-intro} when $b(x)$ is ``sufficiently large'', namely $\beta\gg0$.

Our result is a new contribution on the line of research initiated by G\'{o}mez-Re\~{n}asco and L\'{o}pez-G\'{o}mez in \cite{GRLG-00},
where the authors (supported by numerical evidence) conjectured the fact that there exist at least $2^{m}-1$ positive solutions for the Dirichlet problem \eqref{BVP-jde}
when $a(x)$ has $m$ positive humps separated by $m-1$ negative ones and the negative part $a^{-}(x)$ is large enough. 
For the Dirichlet problem, contributions in this direction have been achieved in \cite{BoGoHa-05, FeZa-15jde, GaHaZa-03mod, GiGo-09prse} (see also the references therein).
At the best of our knowledge, this is the first work addressing the same questions for a problem with conflicting nonlinearities and Sturm-Liouville boundary conditions.

\medskip 

The plan of the paper is as follows. In Section~\ref{section-2} we introduce all the hypotheses on the elements involved in 
\eqref{eq-intro}-\eqref{BC-intro} which are assumed for the rest of the paper, moreover we define the subsets of $X$
where we compute the Leray-Schauder degree.
In Section~\ref{section-3} we briefly recall and adapt the main result in \cite{FeZa-15jde} in the context of
Sturm-Liouville boundary value problems.
Section~\ref{section-4} is devoted to our main result (cf.~Theorem~\ref{main-theorem}). Taking advantage of the discussion in Section~\ref{section-3},
we give the proof and we present some consequences.
In Section~\ref{section-5} we provide an application to radially symmetric solutions of elliptic PDEs on annular domains.
The final Section~\ref{section-6} is dedicated to some conclusive comments and possible variants of Theorem~\ref{main-theorem}.

\section{Setting and notation}\label{section-2}

In this section we present the main elements involved in the study of the positive solutions to the boundary value problem
\begin{equation}\label{BVP}
\begin{cases}
\, u'' + f(x,u) = 0 \\
\, \alpha u(0) -\beta u'(0) = 0 \\
\, \gamma u(L) + \delta u'(L) = 0,
\end{cases}
\end{equation}
where $f\colon\mathopen{[}0,T\mathclose{]}\times \mathbb{R}^{+}\to\mathbb{R}$ is a function of the form
\begin{equation}\label{def_f}
f(x,s):= \sum_{i=1}^{m} \alpha_{i} a_{i}(x)g_{i}(s) - \sum_{j=0}^{m+1} \beta_{j} b_{j}(x)k_{j}(s),
\end{equation}
with $m\geq1$, and $\alpha, \beta, \gamma, \delta \geq 0$ with $\gamma\beta + \alpha\gamma +\alpha\delta > 0$.

The following hypotheses and positions will be assumed from now on in the paper.

Let $m\geq1$ be an integer.
Let $\alpha_{i}>0$, for $i=1,\ldots,m$, 
and $\beta_{j}>0$, for $j=0,\ldots,m+1$, be real parameters.

Let $a_{i}\colon \mathopen{[}0,L\mathclose{]} \to \mathbb{R}^{+}$, for $i=1,\ldots,m$, 
and $b_{j} \colon \mathopen{[}0,L\mathclose{]} \to \mathbb{R}^{+}$, for $j=0,\ldots,m+1$, be (non-negative) Lebesgue integrable functions.
Moreover, we assume that
\begin{itemize}
\item [$(h_{1})$] 
\textit{there exist $2m+2$ closed and pairwise disjoint intervals $I_{1},\ldots,I_{m}$ and
$J_{0},\ldots,J_{m+1}$ ($J_{0}$ and $J_{m+1}$ possibly empty), such that
\begin{equation*}
\begin{aligned}
&a_{i}\not\equiv 0 \; \text{ on } I_{i}, & &a_{i}\equiv 0  \;  \text{ on } \mathopen{[}0,L\mathclose{]}\setminus I_{i},&  &i=1,\ldots,m; \\
&b_{j}\not\equiv 0 \;  \text{ on } J_{j},& &b_{j}\equiv 0  \;  \text{ on } \mathopen{[}0,L\mathclose{]}\setminus J_{j},&  &j=0,1,\ldots,m+1.
\end{aligned}
\end{equation*}
}
\end{itemize}
Without loss of generality, up to a relabelling of the indices, we can assume that $\max I_{i} \leq \min I_{k}$, for all $i<k$; $\max J_{j} \leq \min J_{k}$, for all $j<k$;
$\max I_{i} \leq \min J_{j}$, for all $i<j$; between two intervals $I_{i}$ and $I_{i+1}$ there is an interval $J_{j}$;
between two intervals $J_{j}$ and $J_{j+1}$ there is an interval $I_{i}$. Moreover, eventually extending the functions $a_{i}(x)$ as $0$
on $(\mathopen{[}0,L\mathclose{]}\setminus I_{i})\cap \mathopen{[}\max J_{j},\min J_{j+1}\mathclose{]}$ (with $I_{i}$ between $J_{j}$ and $J_{j+1}$), we can also suppose that
\begin{equation*}
\bigcup_{i=1}^{m} I_{i} \cup \bigcup_{j=0}^{m+1} J_{j} = \mathopen{[}0,L\mathclose{]}.
\end{equation*}
Summarizing all the conventions, it is not restrictive to label the intervals $I_{i}$ and $J_{j}$ following the natural order given by the
standard orientation of the real line and thus determine $2m + 2$ points
\begin{equation*}
0 = \tau_{0} \leq \sigma_{1} < \tau_{1} < \sigma_{2} < \tau_{2} < \ldots < \sigma_{m-1} < \tau_{m-1} < \sigma_{m} < \tau_{m} \leq \sigma_{m+1} = L,
\end{equation*}
so that
\begin{equation*}
I_{i} := \mathopen{[}\sigma_{i},\tau_{i}\mathclose{]}, \quad i=1,\ldots,m, \quad \text{ and } \quad
J_{j} := \mathopen{[}\tau_{j},\sigma_{j+1}\mathclose{]}, \quad j=0,\ldots,m+1.
\end{equation*}
Finally, consistently with assumption $(h_{1})$ and without loss of generality, we select the points $\sigma_{i}$
and $\tau_{i}$ in such a manner that $b_{j}(x)\not\equiv0$  on all right neighborhoods of $\tau_{j}$ and on all left neighborhoods of $\sigma_{j+1}$.
In other words, if there is an interval $K$ contained in $\mathopen{[}0,L\mathclose{]}$ where $a(x)\equiv 0$,
we choose the points $\sigma_{i}$ and $\tau_{i}$ so that $K$ is contained in one of the $I_{i}$
or $K$ is contained in the interior of one of the $J_{j}$.

\medskip

Let $g_{i}\colon \mathbb{R}^{+} \to \mathbb{R}^{+}$, for $i=1,\ldots,m$, 
and $k_{j}\colon \mathbb{R}^{+} \to \mathbb{R}^{+}$, $j=0,\ldots,m+1$, be continuous functions and such that
\begin{equation*}
\begin{aligned}
& g_{i}(0)=0, &\quad g_{i}(s)>0, \quad \text{for } s>0, &\qquad i=1,\ldots,m; \\
& k_{j}(0)=0, &\quad k_{j}(s)>0, \quad \text{for } s>0, &\qquad j=0,\ldots,m+1.
\end{aligned}
\leqno{(h_{2})}
\end{equation*}
We define
\begin{equation*}
g_{0}^{i}:= \limsup_{s\to0^{+}}\dfrac{g_{i}(s)}{s}, \quad g_{\infty}^{i} := \liminf_{s\to +\infty}\dfrac{g_{i}(s)}{s}, \qquad i=1,\ldots,m,
\end{equation*}
and 
\begin{equation*}
k_{0}^{j}:= \limsup_{s\to0^{+}}\dfrac{k_{j}(s)}{s}, \qquad j=0,\ldots,m+1.
\end{equation*}
For all $i=1,\ldots,m$ and for all $j=0,\ldots,m+1$, we suppose
\begin{equation*}
g_{0}^{i} < +\infty, \qquad g_{\infty}^{i}>0, \qquad k_{0}^{j}< +\infty.
\leqno{(h_{3})}
\end{equation*}
We denote with $\lambda_{0}$ the first (positive) eigenvalue of the eigenvalue problem
\begin{equation*}
\begin{cases}
\, \varphi'' +\lambda \bigl{[}\sum_{i=1}^{m}a_{i}(x)\bigr{]} \varphi = 0 \\
\, \alpha \varphi(0) -\beta \varphi'(0) = 0 \\
\, \gamma \varphi(L) + \delta \varphi'(L) = 0,
\end{cases}
\end{equation*}
and, for $i=1,\ldots,m$, with $\lambda_{1}^{i}$ the first eigenvalue of the eigenvalue problem in $I_{i}$
\begin{equation*}
\begin{cases}
\, \varphi'' + \lambda a_{i}(x) \varphi = 0 \\
\, \varphi|_{\partial I_{i}}=0.
\end{cases}
\end{equation*}
If $\tau_{0}=\sigma_{1}=0$ or $\tau_{m}=\sigma_{m+1}=L$, we denote with $\lambda_{1}^{i}$ (with $i=1$ or $i=m$, respectively) the first eigenvalue of the eigenvalue problem
\begin{equation*}
\begin{cases}
\, \varphi'' + \lambda a_{i}(x) \varphi = 0 \\
\, \alpha \varphi(0) -\beta \varphi'(0) = 0 \\
\, \varphi(\tau_{1}) = 0
\end{cases}
\quad \text{ or } \quad
\begin{cases}
\, \varphi'' + \lambda a_{i}(x) \varphi = 0 \\
\, \varphi(\sigma_{m})= 0 \\
\, \gamma \varphi(L) + \delta \varphi'(L) = 0,
\end{cases}
\end{equation*}
respectively. Clearly, if $\beta=0$ or $\delta=0$, respectively, the definition of $\lambda_{1}^{i}$ is the same as before.
Using the assumptions on $a_{i}(x)$, in any case, we obtain that $\lambda_{1}^{i} > 0$ for each $i=1,\ldots,m$.

\medskip

Finally, we introduce some open subsets of the Banach space $\mathcal{C}(\mathopen{[}0,T\mathclose{]})$, endowed with the $\sup$-norm $\|\cdot\|_{\infty}$.
Let $\mathcal{I}\subseteq\{1,\ldots,m\}$ be a subset of indices (possibly empty) and let
$d,D$ be two fixed positive real numbers with $d<D$.
We define two families of open unbounded sets
\begin{equation}\label{eq-Omega}
\begin{aligned}
\Omega^{\mathcal{I}}_{d,D}:=
\biggl{\{}\,u\in\mathcal{C}(\mathopen{[}0,T\mathclose{]})\colon
   & \max_{x\in I_{i}}|u(x)|<D, \, i\in\mathcal{I};                             &
\\ & \max_{x\in I_{i}}|u(x)|<d, \, i\in\{1,\ldots,m\}\setminus\mathcal{I}       & \biggr{\}}
\end{aligned}
\end{equation}
and
\begin{equation}\label{eq-Lambda}
\begin{aligned}
\Lambda^{\mathcal{I}}_{d,D}:=
\biggl{\{}\,u\in\mathcal{C}(\mathopen{[}0,T\mathclose{]})\colon
   & d < \max_{x\in I_{i}}|u(x)|<D, \, i\in\mathcal{I};                           &
\\ & \max_{x\in I_{i}}|u(x)|<d, \, i\in\{1,\ldots,m\}\setminus\mathcal{I}     & \biggr{\}}.
\end{aligned}
\end{equation}
In the sequel, once the constants $d$ and $D$ are fixed, we simply use the symbols $\Omega^{\mathcal{I}}$ and $\Lambda^{\mathcal{I}}$
to denote $\Omega^{\mathcal{I}}_{d,D}$ and $\Lambda^{\mathcal{I}}_{d,D}$, respectively.

\section{Reviewing a previous multiplicity result}\label{section-3}

In this section we briefly recall the main result obtained by Feltrin and Zanolin in \cite{FeZa-15jde} concerning multiplicity of positive solutions to \eqref{BVP}.
More precisely, in \cite{FeZa-15jde} the authors dealt with positive solutions of the Dirichlet boundary value problem (i.e.~problem \eqref{BVP} with $\alpha=\gamma=1$ and $\beta=\delta=0$).
Subsequently, in \cite[\S~5.4]{FeZa-15jde} they observed that the approach presented therein could be adapted to the study of different boundary conditions, for example 
$u(0)=u'(L)=0$ or $u'(0)=u(L)=0$, which are clearly covered by the ones discussed in the present paper.
This section is devoted to the presentation of the multiplicity result in \cite{FeZa-15jde} in the context of a Sturm-Liouville boundary value problem.
Since there are some difference in considering the Sturm-Liouville boundary conditions, we will give a sketch of the proof.

\medskip

Let us consider a general map $f\colon \mathopen{[}0,L\mathclose{]}\times {\mathbb{R}}^{+}\to {\mathbb{R}}$ and suppose that $f(x,s)$ is an $L^{1}$-Carath\'{e}odory function, that is
$x\mapsto f(x,s)$ is measurable for each $s\in{\mathbb{R}}^{+}$, $s\mapsto f(x,s)$ is continuous for a.e.~$x\in\mathopen{[}0,L\mathclose{]}$,
for each $d>0$ there is $\eta_{d}\in L^{1}(\mathopen{[}0,L\mathclose{]},{\mathbb{R}}^{+})$ such that $|f(x,s)|\leq\eta_{d}(x)$, for a.e.~$x\in\mathopen{[}0,L\mathclose{]}$ and for all $|s|\leq d$.

In order to state the multiplicity result we list the following hypotheses that will be assumed.
\begin{itemize}
\item [$(f^{*})$] 
$f(x,0) = 0$, for a.e.~$x\in \mathopen{[}0,L\mathclose{]}$.
\item [$(f_{0}^{-})$]
There exists a function $q_{-}\in L^{1}(\mathopen{[}0,L\mathclose{]},{\mathbb{R}}^{+})$ such that
\begin{equation*}
\liminf_{s\to0^{+}}\dfrac{f(x,s)}{s}\geq -q_{-}(x), \quad \text{uniformly a.e. } x\in\mathopen{[}0,L\mathclose{]}.
\end{equation*}
\item [$(f_{0}^{+})$]
There exists a function
$q_{0}\in L^{1}(\mathopen{[}0,L\mathclose{]},{\mathbb{R}}^{+})$ with $q_{0}\not\equiv0$ such that
\begin{equation*}
\limsup_{s\to 0^{+}}\dfrac{f(x,s)}{s}\leq q_{0}(x), \quad \text{uniformly a.e. } x\in \mathopen{[}0,L\mathclose{]},
\end{equation*}
and
\begin{equation*}
\mu_{1}(q_{0})>1,
\end{equation*}
where $\mu_{1}(q_{0})$ is the first positive eigenvalue of the eigenvalue problem
\begin{equation*}
\varphi''+\mu q_{0}(x)\varphi=0, \qquad \varphi(0)=\varphi(L)=0.
\end{equation*}
\item [$(H)$]
There exist $m\geq 1$ intervals $I_{1},\ldots,I_{m}$, closed and pairwise disjoint, such that
\begin{equation*}
\begin{aligned}
   & f(x,s) \geq 0, \quad \text{for a.e. }  x\in \bigcup_{i=1}^{m} I_{i} \mbox{ and for all } s\geq 0;
\\ & f(x,s) \leq 0, \quad \text{for a.e. }  x\in \mathopen{[}0,L\mathclose{]}\setminus \bigcup_{i=1}^{m} I_{i} \mbox{ and for all } s\geq 0.
\end{aligned}
\end{equation*}
\item [$(f_{\infty})$]
For all $i=1,\ldots,m$ there exists a function $q_{\infty}^{i}\in L^{1}(I_{i},{\mathbb{R}}^{+})$
with $q_{\infty}^{i}\not\equiv0$ such that
\begin{equation*}
\liminf_{s\to+\infty}\dfrac{f(x,s)}{s}\geq q_{\infty}^{i}(x), \quad \text{uniformly a.e. } x\in I_{i},
\end{equation*}
and
\begin{equation*}
\mu_{1}^{I_{i}}(q_{\infty}^{i})<1,
\end{equation*}
where $\mu_{1}^{I_{i}}(q_{\infty}^{i})$ is the first positive eigenvalue of the eigenvalue problem in $I_{i}$
\begin{equation*}
\varphi''+\mu q_{\infty}^{i}(x)\varphi=0, \qquad \varphi|_{\partial I_{i}}=0.
\end{equation*}
\end{itemize}

We observe that, since $f(x,s)$ satisfies condition $(f_{0}^{+})$, from the continuity
of the eigenvalue $\mu_{1}(q_{0})$ as a function of $q_{0}$ we can derive that there exists $r_{0}>0$ such that
\begin{itemize}
\item [$(h_{0})$]
the following inequality holds
\begin{equation*}
\dfrac{f(x,s)}{s}\leq q_{0}(x)+\varepsilon, \quad \text{for a.e. } x\in \mathopen{[}0,L\mathclose{]}, \; \forall \, 0<s\leq r_{0},
\end{equation*}
for every $\varepsilon>0$ such that $\mu_{1}(q_{0}+\varepsilon)>1$.
\end{itemize}

Now we can state the multiplicity result for positive solutions of the boundary value problem \eqref{BVP} (cf.~\cite[Theorem~4.1]{FeZa-15jde}).
We only give a sketch of the proof (for more details, we refer to \cite{FeZa-15jde}).

\begin{theorem}\label{Th-Multiplicity}
Let $f\colon\mathopen{[}0,L\mathclose{]}\times{\mathbb{R}}^{+}\to {\mathbb{R}}$ be an $L^{1}$-Carath\'{e}odory function
satisfying $(f^{*})$, $(f_{0}^{-})$, $(f_{0}^{+})$, $(H)$ and $(f_{\infty})$.
Let $r_{0}>0$ satisfy $(h_{0})$. Suppose that
\begin{enumerate}[label={$(\bigstar)$}]
\item \label{Star}
there exists $r\in\mathopen{]}0,r_{0}\mathclose{]}$ such that for every $\emptyset \neq \mathcal{I} \subseteq\{1,\ldots,m\}$
and every $L^{1}$-Carath\'{e}odory function
$h\colon \mathopen{[}0,L\mathclose{]}\times{\mathbb{R}}^{+}\to{\mathbb{R}}$ satisfying 
\begin{equation*}
\begin{aligned}
   &  h(x,s) \geq f(x,s), \quad \text{for a.e. } x\in \bigcup_{i\in {\mathcal{I}}} I_{i}, \; \forall \, s\geq 0,
\\ &  h(x,s) = f(x,s),    \quad \text{for a.e. } x\in \mathopen{[}0,L\mathclose{]} \setminus \bigcup_{i\in {\mathcal{I}}} I_{i}, \; \forall \, s\geq 0,
\end{aligned}
\end{equation*}
any non-negative solution $u(x)$ of
\begin{equation*}
u''+ h(x,u)=0
\end{equation*}
satisfies $\max_{x\in I_{i}} u(x) \neq r$ for every $i\in\{1,\ldots,m\}\setminus\mathcal{I}$.
\end{enumerate}
Then there exist at least $2^{m}-1$ positive solutions of the boundary value problem \eqref{BVP}.
\end{theorem}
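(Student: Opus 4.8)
The plan is to recast \eqref{BVP} as a fixed point problem and to read off the $2^{m}-1$ solutions from a short sequence of Leray--Schauder degree computations on the sets $\Omega^{\mathcal{I}}$ and $\Lambda^{\mathcal{I}}$ of Section~\ref{section-2}. First I would set up the functional framework. Using $(f^{*})$ I extend $f(x,\cdot)$ to all of $\mathbb{R}$ by setting $f(x,s)=0$ for $s\le 0$; the extension is still $L^{1}$-Carath\'eodory. Since $\gamma\beta+\alpha\gamma+\alpha\delta>0$, the homogeneous problem $u''=0$ with the boundary conditions in \eqref{BVP} has only the trivial solution, so the associated Green function $G\ge 0$ exists and I define the completely continuous operator $\Phi\colon X\to X$, $X=\mathcal{C}(\mathopen{[}0,L\mathclose{]})$, by $(\Phi u)(x)=\int_{0}^{L}G(x,\xi)f(\xi,u(\xi))\,d\xi$, complete continuity following from the Carath\'eodory bounds and the Ascoli--Arzel\`a theorem. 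Fixed points of $\Phi$ are exactly the solutions of \eqref{BVP} for the extended $f$. A maximum principle then forces every fixed point to be non-negative (on any interval where $u<0$ one has $u''=-f(x,u)=0$, so $u$ is affine and the boundary conditions rule out negative values), and the strong maximum principle together with $(f_{0}^{-})$ upgrades any nontrivial fixed point to a genuine positive solution, i.e. $u>0$ on $\mathopen{]}0,L\mathclose{[}$. I fix once and for all $d:=r$, with $r$ as in \ref{Star}.

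Next I would establish the a priori bounds that make the degree well defined on the unbounded sets $\Omega^{\mathcal{I}}$, $\Lambda^{\mathcal{I}}$. Two mechanisms cooperate. On each interval $J_{j}$ the sign condition $(H)$ gives $u''=-f(x,u)\ge 0$, so $u$ is convex there and its values are controlled by those at the endpoints, which lie in the intervals $I_{i}$; hence any bound of the form $\max_{I_{i}}|u|<D$ for all $i$ propagates to a bound on $\|u\|_{\infty}$. On the other hand $(f_{\infty})$, through the eigenvalue condition $\mu_{1}^{I_{i}}(q_{\infty}^{i})<1$ and a comparison with the first eigenfunction on $I_{i}$, yields a constant $D>d$ such that no non-negative solution of any of the modified equations appearing below can satisfy $\max_{I_{i}}u=D$. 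Together these give compactness of the relevant solution sets inside $\overline{\Omega^{\mathcal{I}}}$, so the Leray--Schauder degree for locally compact operators on (possibly unbounded) open sets is well defined there.

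The heart of the proof is the pair of degree evaluations. For the ``all small'' set I claim $\deg(\mathrm{Id}-\Phi,\Omega^{\emptyset}_{d,D},0)=1$: condition $(f_{0}^{+})$ with $\mu_{1}(q_{0})>1$ (in the form $(h_{0})$) rules out nontrivial solutions of the homotopy $u=\vartheta\,\Phi u$, $\vartheta\in\mathopen{[}0,1\mathclose{]}$, on $\partial\Omega^{\emptyset}$, while the convexity bound keeps the homotopy admissible, so the degree equals that of the trivial map, namely $1$. For $\emptyset\ne\mathcal{I}\subseteq\{1,\dots,m\}$ I claim $\deg(\mathrm{Id}-\Phi,\Omega^{\mathcal{I}}_{d,D},0)=0$: I replace $f$ by an $L^{1}$-Carath\'eodory function $h$ with $h\ge f$ on $\bigcup_{i\in\mathcal{I}}I_{i}$ and $h=f$ elsewhere, chosen to push the nonlinearity so strongly upward on the $\mathcal{I}$-intervals that, by the superlinear comparison of the previous paragraph, the problem $u''+h(x,u)=0$ has no solution inside $\Omega^{\mathcal{I}}$ (any candidate would have to grow past $D$ on some $I_{i}$, $i\in\mathcal{I}$). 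Condition \ref{Star}, applied with this $h$, guarantees that no solution along the linear homotopy between $f$ and $h$ touches the wall $\{\max_{I_{i}}u=d\}$ for $i\notin\mathcal{I}$, and the $D$-bound handles the remaining part of $\partial\Omega^{\mathcal{I}}$; hence the homotopy is admissible and the degree is that of the solution-free problem for $h$, namely $0$.

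Finally I would combine these by additivity. Taking $h=f$ in \ref{Star} shows that solutions of $u''+f(x,u)=0$ never satisfy $\max_{I_{i}}u=d$, so the fixed point set inside $\overline{\Omega^{\mathcal{I}}}$ avoids the walls and the decomposition $\Omega^{\mathcal{I}}=\bigsqcup_{\mathcal{J}\subseteq\mathcal{I}}\Lambda^{\mathcal{J}}$ (up to those walls) gives, by additivity and excision, $\deg(\mathrm{Id}-\Phi,\Omega^{\mathcal{I}},0)=\sum_{\mathcal{J}\subseteq\mathcal{I}}\deg(\mathrm{Id}-\Phi,\Lambda^{\mathcal{J}},0)$. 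With $\deg(\mathrm{Id}-\Phi,\Omega^{\emptyset},0)=\deg(\mathrm{Id}-\Phi,\Lambda^{\emptyset},0)=1$ and $\deg(\mathrm{Id}-\Phi,\Omega^{\mathcal{I}},0)=0$ for $\mathcal{I}\ne\emptyset$, M\"obius inversion over the Boolean lattice yields $\deg(\mathrm{Id}-\Phi,\Lambda^{\mathcal{I}},0)=(-1)^{|\mathcal{I}|}\ne0$ for every nonempty $\mathcal{I}$. Each such $\Lambda^{\mathcal{I}}$ therefore contains a fixed point of $\Phi$, i.e. a positive solution of \eqref{BVP}, and since the $\Lambda^{\mathcal{I}}$ are pairwise disjoint these are $2^{m}-1$ distinct positive solutions. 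I expect the main obstacle to be the vanishing step $\deg(\mathrm{Id}-\Phi,\Omega^{\mathcal{I}},0)=0$: constructing the pushed nonlinearity $h$ and verifying, via \ref{Star} and the uniform bound $D$, that the $f$-to-$h$ homotopy stays admissible on all of $\partial\Omega^{\mathcal{I}}$, all while adapting the maximum-principle and eigenfunction-comparison arguments to the Sturm--Liouville conditions at the outer blocks $J_{0}$, $J_{m+1}$.
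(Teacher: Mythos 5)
Your proposal is correct and follows essentially the same route as the paper: the same fixed-point reformulation via the Green function and maximum principle, the same a priori bounds coming from $(f_{0}^{+})$ (no small nontrivial solutions) and from $(H)$--$(f_{\infty})$ (the uniform constant $R^{*}$), the vanishing of the degree on $\Omega^{\mathcal{I}}$ for $\mathcal{I}\neq\emptyset$ via a pushed nonlinearity $h$ whose homotopy to $f$ is made admissible by \ref{Star}, and the combinatorial extraction of $\text{\rm deg}_{LS}(Id-\Phi,\Lambda^{\mathcal{I}},0)=(-1)^{\#\mathcal{I}}$ (your M\"obius inversion is just the inductive argument of \cite[Lemma~4.1]{FeZa-15jde} written additively). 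The one divergence is that you assign degree $1$ to $\Omega^{\emptyset}=\Lambda^{\emptyset}$, which is the value consistent with $(-1)^{\#\emptyset}=1$ in \textit{Step 6}, whereas the paper's displayed formula \eqref{deg-emptyset} attaches the value $0$ of $\text{\rm deg}_{LS}(Id-\Phi,B(0,R),0)$ to $\Omega^{\emptyset}$ --- as literally written that conflicts with \textit{Step 6} and evidently refers to the full-index set $\Omega^{\{1,\ldots,m\}}$, so your version is the internally consistent one.
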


\begin{proof}
We describe the main steps of the proof, adapting in the framework of our Sturm-Liouville boundary value problem the one given in \cite{FeZa-15jde}.

\medskip
\noindent\textit{Step 1. Equivalent fixed point problem. }
Using a standard procedure, we extend
$f(x,s)$ with the function $\tilde{f}\colon \mathopen{[}0,L\mathclose{]} \times \mathbb{R} \to \mathbb{R}$ defined as
\begin{equation*}
\tilde{f}(x,s)=
\begin{cases}
\, f(x,s), & \text{if } s\geq0; \\
\, 0,  & \text{if } s\leq0;
\end{cases}
\end{equation*}
and we consider the modified boundary value problem
\begin{equation}\label{BVPtilde}
\begin{cases}
\, u''+\tilde{f}(x,u)=0 \\
\, \alpha u(0) -\beta u'(0) = 0 \\
\, \gamma u(L) + \delta u'(L) = 0
\end{cases}
\end{equation}
(with $\alpha, \beta, \gamma, \delta \geq 0$ such that $\gamma\beta + \alpha\gamma +\alpha\delta > 0$).
As is well known, by a standard maximum principle (cf.~\cite[Lemma~2.1]{FeZa-15jde} and \cite{MaNjZa-95}), all the possible solutions of \eqref{BVPtilde} are non-negative and
hence solutions of \eqref{BVP}.

Next, we transform problem \eqref{BVPtilde} into an equivalent fixed point problem 
by means of the Green function associated to the equation $u''+u=0$ with the considered boundary conditions (cf.~\cite{ErHuWa-94,ErWa-94}), that is
\begin{equation*}
G(x,s) := \dfrac{1}{\gamma\beta + \alpha\gamma +\alpha\delta}
\begin{cases}
\, (\gamma+\delta-\gamma s)(\beta + \alpha x), & \text{if } \, 0 \leq x \leq s \leq 1; \\
\, (\gamma+\delta-\gamma x)(\beta + \alpha s), & \text{if } \, 0 \leq s \leq x \leq 1.
\end{cases}
\end{equation*}
Namely, we define $\Phi\colon\mathcal{C}(\mathopen{[}0,L\mathclose{]})\to\mathcal{C}(\mathopen{[}0,L\mathclose{]})$ as
\begin{equation}\label{operator}
(\Phi u)(x):= \int_{0}^{L} G(x,\xi)\tilde{f}(\xi,u(\xi)) ~\!d\xi, \quad x\in \mathopen{[}0,L\mathclose{]},
\end{equation}
and we notice that the operator $\Phi$ is completely continuous in $\mathcal{C}(\mathopen{[}0,L\mathclose{]})$ endowed with the $\sup$-norm $\|\cdot\|_{\infty}$.
Moreover, via a strong maximum principle, any nontrivial fixed point of $\Phi$ is a positive solution of \eqref{BVPtilde} and
hence a positive solution of \eqref{BVP}. Therefore we have reduced our problem to the search of nontrivial fixed points for the operator $\Phi$.

\medskip

Now we present how to reach the thesis using a topological approach based on the Leray-Schauder degree for locally compact operators defined on open possibly unbounded sets (cf.~\cite{Nu-85,Nu-93}).
We denote this degree with ``$\text{\rm deg}_{LS}$''.

\medskip
\noindent\textit{Step 2. Degree on small balls. }
Using conditions $(f^{*})$, $(f_{0}^{-})$ and $(f_{0}^{+})$, by a Sturm comparison argument, we observe that every non-negative solution $u(x)$ of the problem
\begin{equation}\label{BVP1}
\begin{cases}
\, u''+ \vartheta f(x,u)=0, \quad 0\leq\vartheta\leq1, \\
\, \alpha u(0) -\beta u'(0) = 0 \\
\, \gamma u(L) + \delta u'(L) = 0
\end{cases}
\end{equation}
satisfying $\max_{x\in \mathopen{[}0,L\mathclose{]}}u(x)\leq r_{0}$ is such that $u(x) = 0$, for all $x\in \mathopen{[}0,L\mathclose{]}$.
For the details, see \cite[Lemma~2.2]{FeZa-15jde}.

As an immediate consequence, by the homotopic invariance property of the degree, we have
\begin{equation*}
\text{\rm deg}_{LS}(Id-\Phi,B(0,r),0)=1, \quad \forall \, 0<r\leq r_{0},
\end{equation*}
wher $B(0,r)$ is the open ball in $\mathcal{C}(\mathopen{[}0,L\mathclose{]})$ centered at zero with radius $r$
(cf.~\cite[Lemma~2.3]{FeZa-15jde}).

\medskip
\noindent\textit{Step 3. Degree on large balls. }
From conditions $(f^{*})$, $(f_{0}^{-})$, $(H)$ and $(f_{\infty})$, we obtain the existence of $R^{*}>0$ such that
for each $L^{1}$-Carath\'{e}odory function $h\colon \mathopen{[}0,L\mathclose{]}\times{\mathbb{R}}^{+}\to{\mathbb{R}}$ with
\begin{equation*}
h(x,s) \geq f(x,s), \quad \text{for a.e. } x\in \bigcup_{i\in {\mathcal{I}}} I_{i}, \; \forall \, s\geq 0,
\end{equation*}
any non-negative solution $u(x)$ (defined in $\mathopen{[}0,L\mathclose{]}$) of the equation
\begin{equation*}
u''+ h(x,u)=0
\end{equation*}
satisfies $\max_{x\in I_{i}} u(x)\neq R^{*}$, for every $i=1,\ldots,m$.

We stress that the constant $R^{*}$ does not depend on the function $h(x,s)$. In particular the result holds for
$h(x,s) := f(x,s) + \alpha \mathbbm{1}_{A}(x)$, where $\alpha\geq0$ and $\mathbbm{1}_{A}$
denotes the indicator function of the set $A:=\bigcup_{i\in {\mathcal{I}}} I_{i}$.
For the details, see \cite[Lemma~2.4]{FeZa-15jde}.

As an immediate consequence, by the homotopic invariance property of the degree (cf.~\cite[Theorem~A.2]{FeZa-15jde}), we obtain
\begin{equation*}
\text{\rm deg}_{LS}(Id-\Phi,B(0,R),0)=0, \quad \forall \, R\geq R^{*},
\end{equation*}
(cf.~\cite[Lemma~2.5]{FeZa-15jde}).

\medskip
\noindent\textit{Step 4. Fixing the constants $r$ and $R$. }
We fix two positive constants $r,R\in\mathbb{R}$ such that 
\begin{equation*}
0<r\leq r_{0} < R^{*} \leq R 
\end{equation*}
and $r$ satisfying \ref{Star}. Then we consider the open unbounded sets $\Omega^{\mathcal{I}}=\Omega^{\mathcal{I}}_{r,R}$
and $\Lambda^{\mathcal{I}}=\Lambda^{\mathcal{I}}_{r,R}$, introduced in \eqref{eq-Omega} and in \eqref{eq-Lambda}, respectively.

\medskip
\noindent\textit{Step 5. Degree on $\Omega^{\mathcal{I}}$. }
For any subset of indices $\mathcal{I}\subseteq\{1,\ldots,m\}$ we compute $\text{\rm deg}_{LS}(Id-\Phi,\Omega^{\mathcal{I}},0)$.
First of all, let $\mathcal{I}=\emptyset$. By a convexity argument, we obtain that
\begin{equation}\label{deg-emptyset}
\text{\rm deg}_{LS}(Id-\Phi,\Omega^{\emptyset},0)=\text{\rm deg}_{LS}(Id-\Phi,B(0,R),0)=0.
\end{equation}
Secondly, let us consider a subset $\mathcal{I}\neq\emptyset$. Using a standard procedure, for any given $h(x,s)$ as in hypothesis \ref{Star}, we define a completely continuous operator
$\Psi_{h}\colon\mathcal{C}(\mathopen{[}0,L\mathclose{]})\to\mathcal{C}(\mathopen{[}0,L\mathclose{]})$ as
\begin{equation*}
(\Psi_{h} u)(x):= \int_{0}^{L} G(x,\xi)\tilde{h}(\xi,u(\xi))~\!d\xi, \quad x\in \mathopen{[}0,L\mathclose{]},
\end{equation*}
where
\begin{equation*}
\tilde{h}(x,s)=
\begin{cases}
\, h(x,s), & \text{if } s\geq0; \\
\, h(x,0),  & \text{if } s\leq0.
\end{cases}
\end{equation*}
Notice that $h(x,0) = 0$, for a.e.~$x\notin\bigcup_{i\in {\mathcal{I}}} I_{i}$, while $h(x,0) \geq 0$, for a.e.~$x\in\bigcup_{i\in {\mathcal{I}}} I_{i}$.
Hence, this extension of $h(x,s)$ allows us to apply a maximum principle.
Next, we observe that condition \ref{Star} ensures that $\Psi_{h}$ has no fixed points in $\partial\Omega^{\mathcal{I}}$ and so
the triplet $(Id-\Psi_{h},\Omega^{\mathcal{I}},0)$ is admissible.
Then, as in \textit{Step 3}, an homotopic argument gives
\begin{equation}\label{deg-1}
\text{\rm deg}_{LS}(Id-\Phi,\Omega^{\mathcal{I}},0)=0, \quad \forall \, \emptyset\neq\mathcal{I}\subseteq\{1,\ldots,m\},
\end{equation}
(cf.~\cite[Lemma~4.2]{FeZa-15jde}).

\medskip
\noindent\textit{Step 6. Degree on $\Lambda^{\mathcal{I}}$. }
For any subset of indices $\mathcal{I}\subseteq\{1,\ldots,m\}$ we now compute $\text{\rm deg}_{LS}(Id-\Phi,\Lambda^{\mathcal{I}},0)$.
Similarly as in \textit{Step 5}, from \ref{Star} we have that the triplet $(Id-\Phi,\Lambda^{\mathcal{I}},0)$ is admissible for every 
$\mathcal{I}\subseteq\{1,\ldots,m\}$.
Using an inductive argument (cf.~\cite[Lemma~4.1]{FeZa-15jde}), from \eqref{deg-emptyset} and \eqref{deg-1}, we obtain that 
\begin{equation*}
\text{\rm deg}_{LS}(Id-\Phi,\Lambda^{\mathcal{I}},0)=(-1)^{\#\mathcal{I}}, \quad \forall \, \mathcal{I}\subseteq\{1,\ldots,m\}.
\end{equation*}

\medskip
\noindent\textit{Step 7. Conclusion. }
Preliminarily, we underline that $0\notin\Lambda^{\mathcal{I}}$ for all $\emptyset\neq\mathcal{I}\subseteq\{1,\ldots,m\}$
and the sets $\Lambda^{\mathcal{I}}$ are pairwise disjoint. Since the number of nonempty subsets of a set with $m$ elements is $2^{m}-1$, there are 
$2^{m}-1$ sets $\Lambda^{\mathcal{I}}$ not containing the null function.
From \textit{Step 6}, in particular we have that
\begin{equation*}
\text{\rm deg}_{LS}(Id-\Phi,\Lambda^{\mathcal{I}},0)\neq0, \quad \forall \, \mathcal{I}\subseteq\{1,\ldots,m\}.
\end{equation*}
Therefore, by the existence property of the Leray-Schauder degree, there exist at least $2^{m}-1$ nontrivial fixed points of $\Phi$.
Finally, as already remarked, via a standard maximum principle argument, we obtain that these nontrivial fixed points are positive solutions
of \eqref{BVP}. The theorem follows.
\end{proof}

\section{Main multiplicity theorem}\label{section-4}

Recalling the setting and the notation introduced in Section~\ref{section-2}, 
now we state and prove the following main result.

\begin{theorem}\label{main-theorem}
Let $m\geq 1$ be an integer. Let $a_{i}\colon \mathopen{[}0,L\mathclose{]} \to \mathbb{R}^{+}$, for $i=1,\ldots,m$, 
and $b_{j} \colon \mathopen{[}0,L\mathclose{]} \to \mathbb{R}^{+}$, for $j=0,\ldots,m+1$, be Lebesgue integrable functions satisfying $(h_{1})$.
Let $g_{i}\colon \mathbb{R}^{+} \to \mathbb{R}^{+}$, for $i=1,\ldots,m$, 
and $k_{j}\colon \mathbb{R}^{+} \to \mathbb{R}^{+}$, $j=0,\ldots,m+1$, be continuous functions satisfying $(h_{2})$ and $(h_{3})$. Let $\alpha_{i}>0$, for all $i=1,\ldots,m$.
Moreover, suppose that 
\begin{equation}\label{hp_A}
\alpha_{i}g_{0}^{i} < \lambda_{0}, \quad \text{for all } \; i=1,\ldots,m,
\end{equation}
and
\begin{equation}\label{hp_B}
\alpha_{i}g_{\infty}^{i} > \lambda_{1}^{i}, \quad \text{for all } \; i=1,\ldots,m.
\end{equation}
Then there exists $\beta^{*}>0$ such that, if
\begin{equation*}
\beta_{j}>\beta^{*}, \quad \text{for all } \; j=0,\ldots,m+1,
\end{equation*}
the boundary value problem \eqref{BVP} with $f(x,s)$ defined in \eqref{def_f}
has at least $2^{m}-1$ positive solutions.
\end{theorem}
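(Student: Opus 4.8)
The plan is to deduce Theorem~\ref{main-theorem} from the abstract multiplicity result Theorem~\ref{Th-Multiplicity} (in its Sturm-Liouville form, as sketched in Section~\ref{section-3}) by checking that, once the $\beta_{j}$ are large, the function $f$ in \eqref{def_f} satisfies all of its hypotheses. First I would record that $f$ is $L^{1}$-Carath\'{e}odory, since the $a_{i},b_{j}$ are integrable and the $g_{i},k_{j}$ are continuous, and that $(f^{*})$ holds at once because $g_{i}(0)=k_{j}(0)=0$ by $(h_{2})$. The sign condition $(H)$ is then immediate from the disjoint-support structure imposed in $(h_{1})$: on each $I_{i}$ all the $b_{j}$ vanish, so $f(x,s)=\alpha_{i}a_{i}(x)g_{i}(s)\ge0$, while on each $J_{j}$ all the $a_{i}$ vanish, so $f(x,s)=-\beta_{j}b_{j}(x)k_{j}(s)\le0$; since $\bigcup_{i}I_{i}\cup\bigcup_{j}J_{j}=\mathopen{[}0,L\mathclose{]}$, this reproduces exactly the two sign conditions with the very intervals $I_{1},\dots,I_{m}$. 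None of this uses $\beta_{j}$.

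Next I would pin down the three asymptotic comparison weights. Dividing \eqref{def_f} by $s$ and letting $s\to0^{+}$, the negative part contributes at most $\sum_{j}\beta_{j}k_{0}^{j}b_{j}(x)$, finite by $(h_{3})$ and in $L^{1}$, which serves as $q_{-}$ in $(f_{0}^{-})$; the positive part yields the upper control $q_{0}(x):=\sum_{i}\alpha_{i}g_{0}^{i}a_{i}(x)$ (adding an arbitrarily small multiple of some $a_{i}$ if every $g_{0}^{i}=0$, so that $q_{0}\not\equiv0$) in $(f_{0}^{+})$. By anti-monotonicity of the principal eigenvalue in the weight, $q_{0}\le(\max_{i}\alpha_{i}g_{0}^{i})\sum_{i}a_{i}$ gives $\mu_{1}(q_{0})\ge\lambda_{0}/\max_{i}(\alpha_{i}g_{0}^{i})>1$ by \eqref{hp_A}, where $\mu_{1}$ is now understood under the Sturm-Liouville conditions. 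On each $I_{i}$, where $f(x,s)=\alpha_{i}a_{i}(x)g_{i}(s)$, letting $s\to+\infty$ produces $q_{\infty}^{i}(x):=\alpha_{i}g_{\infty}^{i}a_{i}(x)$ (truncated if $g_{\infty}^{i}=+\infty$), and $\mu_{1}^{I_{i}}(q_{\infty}^{i})=\lambda_{1}^{i}/(\alpha_{i}g_{\infty}^{i})<1$ by \eqref{hp_B}, giving $(f_{\infty})$; here $\lambda_{0}$ and $\lambda_{1}^{i}$ are exactly the eigenvalues of Section~\ref{section-2}, including the mixed boundary version on a terminal interval $I_{1}$ or $I_{m}$ touching $0$ or $L$. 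These verifications fix a threshold $r_{0}>0$ as in $(h_{0})$.

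The whole weight of the theorem therefore rests on condition $(\bigstar)$, and this is where $\beta_{j}\gg0$ enters and is the main obstacle. I would fix any $r\in\mathopen{]}0,r_{0}\mathclose{]}$ and argue by contradiction and compactness in the parameter: if no threshold $\beta^{*}$ worked, choose $\beta^{(n)}_{j}\to\infty$, nonempty $\mathcal{I}_{n}$, admissible $h_{n}$, non-negative solutions $u_{n}$ of $u_{n}''+h_{n}(x,u_{n})=0$, and indices $i_{n}\notin\mathcal{I}_{n}$ with $\max_{I_{i_{n}}}u_{n}=r$; passing to a subsequence we may take $i_{n}\equiv i_{0}$. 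On $I_{i_{0}}$ one has $h_{n}=f$, so $u_{n}''=-\alpha_{i_{0}}a_{i_{0}}g_{i_{0}}(u_{n})\le0$, i.e.\ $u_{n}$ is concave with $0\le u_{n}\le r$, which bounds $u_{n}$ and $u_{n}'$ on $I_{i_{0}}$ and at its endpoints uniformly in $n$, independently of $\beta$. On each adjacent $J_{j}$ one has $a_{i}\equiv0$ and $u_{n}''=\beta^{(n)}_{j}b_{j}k_{j}(u_{n})\ge0$, so $u_{n}$ is convex there. The decisive and most delicate step is a quantitative collapse: using convexity together with the $\beta$-free bound on $u_{n},u_{n}'$ at $\partial I_{i_{0}}$ and the selection made after $(h_{1})$ that $b_{j}\not\equiv0$ on one-sided neighbourhoods of $\tau_{i_{0}}$ and $\sigma_{i_{0}}$, the strong absorption is expected to force $u_{n}(\sigma_{i_{0}}),u_{n}(\tau_{i_{0}})\to0$. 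Then, by Arzel\`{a}--Ascoli, the concave profiles $u_{n}|_{I_{i_{0}}}$ converge in $\mathcal{C}^{1}$ to a non-negative $v$ solving $v''+\alpha_{i_{0}}a_{i_{0}}g_{i_{0}}(v)=0$ on $I_{i_{0}}$ with $v|_{\partial I_{i_{0}}}=0$, $0\le v\le r\le r_{0}$ and $\max v=r$. But $v\le r_{0}$ allows the Sturm comparison of \textit{Step 2} in the proof of Theorem~\ref{Th-Multiplicity} (via $(h_{0})$ and $\mu_{1}>1$, which only improves on the subinterval $I_{i_{0}}$) to conclude $v\equiv0$, contradicting $\max v=r>0$. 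This produces $\beta^{*}>0$ and a single $r\in\mathopen{]}0,r_{0}\mathclose{]}$ realizing $(\bigstar)$ whenever all $\beta_{j}>\beta^{*}$.

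With $(\bigstar)$ secured, Theorem~\ref{Th-Multiplicity} applies and yields at least $2^{m}-1$ positive solutions of \eqref{BVP} with $f$ as in \eqref{def_f}, completing the proof. The point I expect to demand the most care is precisely the collapse estimate across the strong-absorption intervals, and in particular making it uniform over the combinatorial choices of $\mathcal{I}_{n}$ and over whether the neighbours of $I_{i_{0}}$ are active or inactive, since this is the exact mechanism that converts the largeness of the $\beta_{j}$ into the exclusion of the intermediate level $r$.
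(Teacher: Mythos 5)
Your reduction to Theorem~\ref{Th-Multiplicity} and the verification of $(f^{*})$, $(f_{0}^{-})$, $(f_{0}^{+})$, $(H)$ and $(f_{\infty})$ match the paper's proof essentially step by step (the paper handles the possibility $g_{0}^{i}=0$ by choosing intermediate constants $g_{0}^{i}<g_{*}^{i}<\lambda_{0}/\alpha_{i}$ rather than your small additive patch, but the effect is the same). Where you genuinely diverge is in how $(\bigstar)$ is secured. The paper argues directly and quantitatively: assuming $\max_{I_{\ell}}u=r$, it first bounds $|u'|\leq M_{\ell}$ on $I_{\ell}$, then multiplies the equation by the eigenfunction $\varphi_{\ell}$ of $\lambda_{1}^{\ell}$ and integrates by parts to get the explicit endpoint lower bound $u(\sigma_{\ell})+u(\tau_{\ell})\geq c_{\ell}r$, and only then uses convexity on the adjacent $J$-interval to push $u$ above the a priori bound $R$ once $\beta_{\ell}$ exceeds an explicit threshold. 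You run the same two mechanisms in reverse order and non-quantitatively: your ``collapse'' step is exactly the paper's final computation (tangent-line lower bound from the $\beta$-free estimates on $u,u'$ at $\partial I_{i_{0}}$, then double integration of $u''\geq\beta^{(n)}\nu b_{j}$ against $u\leq R$), used here to show the endpoint values tend to zero, while your limit passage plus Sturm comparison on $I_{i_{0}}$ replaces the paper's eigenfunction identity. Both routes are sound; the paper's buys an explicit $\beta^{*}$ (and a single clean contradiction per solution), whereas yours is softer and needs the bookkeeping you already flag: finitely many choices of $\mathcal{I}$ and $i_{0}$, $\mathcal{C}^{1}$-compactness on $I_{i_{0}}$, and the degenerate cases $\sigma_{1}=0$ or $\tau_{m}=L$, where one side of $I_{i_{0}}$ has no absorption interval and the Sturm--Liouville condition at that endpoint must be used instead (the paper treats this via the one-sided inequality \eqref{eq-ineq2}). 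Do write out the collapse estimate rather than leaving it as ``expected'': it is only two lines, but it is the one place where the convention that $b_{j}\not\equiv 0$ on all one-sided neighbourhoods of $\tau_{i_0}$ and $\sigma_{i_0}$, and the $\beta$-independence of the global bound $R$, actually enter.
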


\begin{proof}
In order to prove the theorem, we are going to enter the setting of Theorem~\ref{Th-Multiplicity} and to check that all its hypotheses are satisfied for $\beta_{j}>0$ sufficiently large.

First of all, we observe that the map $f(x,s)$ defined as in \eqref{def_f} is an $L^{1}$-Carath\'{e}odory function and,
moreover, satisfies $(f^{*})$, due to condition $(h_{2})$.
From
\begin{equation*}
\liminf_{s\to0^{+}}\dfrac{f(x,s)}{s} 
 \geq \sum_{j=0}^{m+1} \beta_{j} b_{j}(x)\liminf_{s\to0^{+}} -\dfrac{k_{j}(s)}{s}
 = - \sum_{j=0}^{m+1} \beta_{j} b_{j}(x) k_{0}^{j},
\end{equation*}
for a.e.~$x\in\mathopen{[}0,L\mathclose{]}$,
and from the last assumption in $(h_{3})$, we deduce that $(f_{0}^{-})$ holds with $q_{-}\in L^{1}(\mathopen{[}0,L\mathclose{]},\mathbb{R}^{+})$
defined as
\begin{equation*}
q_{-}(x) := \sum_{j=0}^{m+1} \beta_{j} b_{j}(x) k_{0}^{j}, \quad x\in\mathopen{[}0,L\mathclose{]}.
\end{equation*}
For $i=1,\ldots,m$, by hypothesis \eqref{hp_A} let us fix $g_{*}^{i}>0$ such that $g_{0}^{i}<g_{*}^{i}<\lambda_{0}/\alpha_{i}$. Next, we define
\begin{equation*}
q_{0}(x) := \sum_{i=1}^{m} \alpha_{i} a_{i}(x)g_{*}^{i}, \quad x\in\mathopen{[}0,L\mathclose{]}. 
\end{equation*}
We observe that $q_{0}\in L^{1}(\mathopen{[}0,L\mathclose{]},\mathbb{R}^{+})$, $q_{0}\not\equiv0$ and
\begin{equation*}
\mu_{1}(q_{0}) \geq \mu_1 \biggl{(} \Bigl{(}\max_{i=1,\ldots,m} \alpha_{i}g_{*}^{i}\Bigr{)} \sum_{i=1}^{m}a_{i}(x)\biggr{)} 
= \dfrac{\lambda_{0}}{\displaystyle \max_{i=1,\ldots,m} \alpha_{i}g_{*}^{i}}>1.
\end{equation*}
Then condition $(f_{0}^{+})$ is valid.
Concerning the sign of $a(x)$, we observe that hypothesis $(H)$ directly follows from condition $(h_{1})$.
Furthermore, defining 
\begin{equation*}
q_{\infty}^{i}(x) := \alpha_{i} a_{i}(x) g_{\infty}^{i}, \quad x\in I_{i}, \quad \text{for } \, i=1,\ldots,m,
\end{equation*}
and observing that $q_{\infty}^{i}\in L^{1}(I_{i},\mathbb{R}^{+})$, $q_{\infty}^{i}\not\equiv0$ and
\begin{equation*}
\mu_{1}^{I_{i}}(q_{\infty}^{i}) = \dfrac{\lambda_{1}^{i}}{\alpha_{i}g_{\infty}^{i}} <1, \quad i=1,\ldots,m,
\end{equation*}
(by conditions $(h_{3})$ and \eqref{hp_B}), we obtain that $(f_{\infty})$ holds.

As a second step, we prove that hypothesis \ref{Star} is valid.
By condition \eqref{hp_A}, for all $i=1,\ldots,m$, we can choose $\rho_{i}>0$ such that $g_{0}^{i}<\lambda_{0}-\rho_{i}$.
As observed in Section~\ref{section-3}, by hypothesis $(f_{0}^{+})$ we can take $r_{0}>0$ satisfying $(h_{0})$ (as in Theorem~\ref{Th-Multiplicity}).
Next, we fix $0<r \leq r_{0}$ such that
\begin{equation}\label{eq-delta}
\alpha_{i}\dfrac{g_{i}(s)}{s} < \lambda_{0}-\rho_{i}, \quad \forall \, 0<s\leq r,
\end{equation}
(for  $i=1,\ldots,m$).
We claim that \ref{Star} holds for $r$ satisfying \eqref{eq-delta} and taking the parameters $\beta_{j}$ sufficiently large.
Let us consider an arbitrary set of indices $\emptyset\neq\mathcal{I}\subseteq\{1,\ldots,m\}$ and an arbitrary $L^{1}$-Carath\'{e}odory function $h(x,s)$ as in \ref{Star}.
Suppose by contradiction that there exists a non-negative solution $u(x)$ of $u''+ h(x,u)=0$ such that
\begin{equation*}
\max_{x\in I_{\ell}} u(x) = r, \quad \text{ for some index } \, \ell\in\{1,\ldots,m\}\setminus\mathcal{I}.
\end{equation*}

If $\mathcal{I}=\{1,\ldots,m\}$, there is nothing to prove. Then fix $\emptyset\neq\mathcal{I}\subsetneq\{1,\ldots,m\}$.
By the concavity of $u(x)$ in $I_{\ell}$, we have
\begin{equation}\label{eq-conc}
u(x) \geq \dfrac{r}{\tau_{\ell}-\sigma_{\ell}} \min\{x-\sigma_{\ell},\tau_{\ell}-x\},
\quad \forall \, x\in I_{\ell}=\mathopen{[}\sigma_{\ell},\tau_{\ell}\mathclose{]},
\end{equation}
(cf.~\cite[p.~420]{GaHaZa-03cpaa} for a similar estimate)

In order to prove that our assumption is contradictory, we split our argument into three steps.

\medskip
\noindent
\textit{Step 1. A priori bounds for $|u'(x)|$ on $I_{\ell}$.} 
First we notice that $h(x,u(x)) = \alpha_{\ell} a_{\ell}(x) g_{\ell}(u(x))$, for a.e. $x\in I_{\ell}$. Hence
\begin{equation*}
|u''(x)| \leq \eta_{r,\ell}(x):= \alpha_{\ell} a_{\ell}(x)\max_{0\leq s\leq r}g_{\ell}(s), \quad \text{for a.e. } x\in I_{\ell},
\end{equation*}
and, therefore
\begin{equation*}
|u'(y_{1}) - u'(y_{2})| \leq \|\eta_{r,\ell}\|_{L^{1}(I_{\ell})}, \quad \forall \, y_{1}, y_{2} \in I_{\ell}.
\end{equation*}
Since $0\leq u(x) \leq r$ for all $x\in I_{\ell}$, there exists a point $\hat{x}_{\ell} \in I_{\ell}$ such that
$|u'(\hat{x}_{\ell})| \leq r/(\tau_{\ell} - \sigma_{\ell})$.
Hence for all $x\in I_{\ell}$
\begin{equation}\label{eq-MK}
|u'(x)| \leq |u'(\hat{x}_{\ell})| + |u'(x) - u'(\hat{x}_{\ell})| \leq \dfrac{r}{\tau_{\ell} - \sigma_{\ell}} + \|\eta_{r,\ell}\|_{L^{1}(I_{\ell})} =: M_{\ell}.
\end{equation}

\medskip
\noindent
\textit{Step 2. Lower bounds for $u(x)$ on the boundary of $I_{\ell}$.}
Let $\varphi_{\ell}(x)$ be the positive eigenfunction of the eigenvalue problem on $I_{\ell}$
\begin{equation*}
\varphi''+\lambda_{1}^{\ell} a_{\ell}(x)\varphi=0, \quad \varphi|_{\partial I_{\ell}}=0,
\end{equation*}
with $\|\varphi_{\ell}\|_{\infty}=1$, where $\lambda_{1}^{\ell}>0$ is the first eigenvalue.
Then $\varphi_{\ell}(x)\geq0$, for all $x\in I_{\ell}$, $\varphi_{\ell}(x)>0$, for all $x\in\mathopen{]}\sigma_{\ell},\tau_{\ell}\mathclose{[}$,
and $\varphi_{\ell}'(\sigma_{\ell})>0>\varphi_{\ell}'(\tau_{\ell})$ (hence $\|\varphi_{\ell}'\|_{\infty}>0$).

By \eqref{eq-delta} and the fact that $\lambda_{0} \leq \lambda_{1}^{\ell}$, we know that
\begin{equation*}
\alpha_{\ell}g_{\ell}(s)<(\lambda_{1}^{\ell}-\rho_{\ell}) s, \quad \forall \, 0<s\leq r.
\end{equation*}
Then, using \eqref{eq-conc}, we have
\begin{equation*}
\begin{aligned}
   & \|\varphi_{\ell}'\|_{\infty}(u(\sigma_{\ell})+u(\tau_{\ell})) \geq
\\ & \geq u(\sigma_{\ell})\varphi_{\ell}'(\sigma_{\ell})+u(\tau_{\ell})|\varphi_{\ell}'(\tau_{\ell})|
= u(\sigma_{\ell})\varphi_{\ell}'(\sigma_{\ell})-u(\tau_{\ell})\varphi_{\ell}'(\tau_{\ell})
\\ & = \Bigl{[}u'(x)\varphi_{\ell}(x)-u(x)\varphi_{\ell}'(x)\Bigr{]}_{x=\sigma_{\ell}}^{x=\tau_{\ell}}
\\ & = \int_{\sigma_{\ell}}^{\tau_{\ell}}\dfrac{d}{dx}\Bigl{[}u'(x)\varphi_{\ell}(x)-u(x)\varphi_{\ell}'(x)\Bigr{]} ~\!dx
\\ & = \int_{I_{\ell}}\Bigl{[}u''(x)\varphi_{\ell}(x)-u(x)\varphi_{\ell}''(x)\Bigr{]} ~\!dx
\\ & = \int_{I_{\ell}}\Bigl{[}-h(x,u(x))\varphi_{\ell}(x)+u(x)\lambda_{1}^{\ell}a_{\ell}(x)\varphi_{\ell}(x)\Bigr{]} ~\!dx
\\ & = \int_{I_{\ell}}\Bigl{[}\lambda_{1}^{\ell}u(x)-\alpha_{\ell}g_{\ell}(u(x))\Bigr{]}a_{\ell}(x)\varphi_{\ell}(x) ~\!dx
\\ & > \int_{I_{\ell}}\rho_{\ell} \, \biggl{(}\dfrac{r}{\tau_{\ell}-\sigma_{\ell}} \min\{x-\sigma_{\ell},\tau_{\ell}-x\}\biggr{)}a_{\ell}(x)\varphi_{\ell}(x) ~\!dx
\\ & = r\, \biggl{[}\dfrac{\rho_{\ell}}{\tau_{\ell}-\sigma_{\ell}}\int_{I_{\ell}}\min\{x-\sigma_{\ell},\tau_{\ell}-x\} a_{\ell}(x)\varphi_{\ell}(x) ~\!dx\biggr{]}.
\end{aligned}
\end{equation*}
Hence, from the above inequality, we conclude that there exists a constant $c_{\ell}>0$, depending on $\rho_{\ell}$,
$I_{\ell}$ and $a_{\ell}(x)$, but independent on $u(x)$ and $r$, such that
\begin{equation*}
u(\sigma_{\ell}) + u(\tau_{\ell}) \geq {c}_{\ell}r > 0.
\end{equation*}
As a consequence of the above inequality, we have that at least one of the two inequalities
\begin{equation}\label{eq-ineq}
0 < \dfrac{c_{\ell}r}{2} \leq u(\tau_{\ell}) \leq r, \qquad  0 < \dfrac{c_{\ell}r}{2} \leq u(\sigma_{\ell}) \leq r,
\end{equation}
holds.

The two inequalities in \eqref{eq-ineq} reduce to a single one, if $\sigma_{1}=0$ and $\beta>0$, or if $\tau_{m}=L$ and $\delta>0$.
Indeed, if $\sigma_{1}=0$ and $\beta>0$, we have 
\begin{equation*}
\begin{aligned}
   & \Bigl{[}u'(x)\varphi_{1}(x)-u(x)\varphi_{1}'(x)\Bigr{]}_{x=0}^{x=\tau_{1}}
\\ & = u'(\sigma_{1})\varphi_{1}(\sigma_{1})-u(\sigma_{1})\varphi_{1}'(\sigma_{1})-u'(\tau_{1})\varphi_{1}(\tau_{1})+u(\tau_{1})\varphi_{1}'(\tau_{1})
\\ & = u'(\sigma_{1})\dfrac{\alpha}{\beta}\varphi_{1}'(\sigma_{1})-u(\sigma_{1})\varphi_{1}'(\sigma_{1})+u(\tau_{1})\varphi_{1}'(\tau_{1})
\\ & = u(\tau_{1})\varphi_{1}'(\tau_{1}) \leq \|\varphi_{1}'\|_{\infty}u(\tau_{1}).
\end{aligned}
\end{equation*}
Analogously, if $\tau_{m}=L$ and $\delta>0$, we have
\begin{equation*}
\Bigl{[}u'(x)\varphi_{1}(x)-u(x)\varphi_{m}'(x)\Bigr{]}_{x=\sigma_{m}}^{x=L} \leq \|\varphi_{m}'\|_{\infty}u(\sigma_{m}).
\end{equation*}
Finally, as a consequence of the previous inequalities, we obtain that
\begin{equation}\label{eq-ineq2}
0 < \dfrac{c_{\ell}r}{2} \leq u(\tau_{1}) \leq r \quad \text{ or } \quad  0 < \dfrac{c_{\ell}r}{2} \leq u(\sigma_{m}) \leq r
\end{equation}
holds, respectively.

\medskip
\noindent
\textit{Step 3. Contradiction on an adjacent interval for $\beta_{\ell}$ large.}
As a first case, we suppose that the first inequality in \eqref{eq-ineq} is true.
If $\tau_{\ell} = L$, then $\delta>0$ in the boundary conditions (otherwise $u(\tau_{\ell})=0$, a contradiction) and we deal with the second inequality in \eqref{eq-ineq2} (see the discussion of the second case below).
Consequently, whenever $\tau_{\ell} < L$, we can focus our attention on the right-adjacent interval $\mathopen{[}\tau_{\ell},\sigma_{\ell+1}\mathclose{]}$, where $f(x,u(x))=-\beta_{\ell}b_{\ell}(x)k_{\ell}(u(x))\leq 0$.
Recall also that, by the convention adopted in defining the intervals $I_{i}$ and $J_{j}$, 
we have that $b_{\ell}(x)$ is not identically zero on all right neighborhoods of $\tau_{\ell}$.

We observe that there exists $R>r$ such that $\max_{x\in \mathopen{[}0,L\mathclose{]}} u(x) < R$.
This is a consequence of $(f^{*})$, $(f_{0}^{-})$, $(H)$ and $(f_{\infty})$, as described in \textit{Step 3} of the proof of Theorem~\ref{Th-Multiplicity}.

Since $k_{\ell}(s) > 0$ for all $s > 0$, we can introduce the positive constant
\begin{equation*}
\nu_{\ell}:=\min_{\frac{{c}_{\ell}r}{4}\leq s\leq R}k_{\ell}(s)>0
\end{equation*}
and define
\begin{equation*}
\delta^{+}_{\ell}:= \min\biggl{\{}\sigma_{\ell+1}-\tau_{\ell},\dfrac{{c}_{\ell}r}{4M_{\ell}}\biggr{\}}>0,
\end{equation*}
where $M_{\ell}>0$ is the bound for $|u'(x)|$ obtained in \eqref{eq-MK} of \textit{Step 1}.
Then, by the convexity of $u(x)$ on $J_{\ell}$, we have that $u(x)$
is bounded from below by the tangent line at $(\tau_{\ell},u(\tau_{\ell}))$, with slope $u'(\tau_{\ell}) \geq -M_{\ell}$. Therefore,
\begin{equation*}
\dfrac{{c}_{\ell}r}{4}\leq u(x)\leq R, \quad \forall \, x\in\mathopen{[}\tau_{\ell},\tau_{\ell}+\delta^{+}_{\ell} \mathclose{]}.
\end{equation*}

We are going to prove that $\max_{x\in J_{\ell}}u(x)>R$ for $\beta_{\ell}>0$ sufficiently large 
(which is a contradiction with respect to the upper bound $R>0$ for $u(x)$).

Consider the interval $\mathopen{[}\tau_{\ell},\tau_{\ell}+\delta^{+}_{\ell}\mathclose{]}\subseteq J_{\ell}$.
For all $x\in \mathopen{[}\tau_{\ell},\tau_{\ell}+\delta^{+}_{\ell}\mathclose{]}$ we have
\begin{equation*}
u'(x) = u'(\tau_{\ell})+\int_{\tau_{\ell}}^{x}\beta_{\ell}b_{\ell}(\xi)k_{\ell}(u(\xi))~\!d\xi
      \geq -M_{\ell}+\beta_{\ell}\nu_{\ell}\int_{\tau_{\ell}}^{x} b_{\ell}(\xi)~\!d\xi,
\end{equation*}
then
\begin{equation*}
u(x) = u(\tau_{\ell})+\int_{\tau_{\ell}}^{x}u'(\xi)~\!d\xi
     \geq \dfrac{{c}_{\ell}r}{2}-M_{\ell}(x-\tau_{\ell})+\beta_{\ell}\nu_{\ell}\int_{\tau_{\ell}}^{x}\biggr(\int_{\tau_{\ell}}^{s} b_{\ell}(\xi)~\!d\xi\biggr) ~\!ds.
\end{equation*}
Hence, for $x=\tau_{\ell}+\delta^{+}_{\ell} $,
\begin{equation*}
R \geq  u(\tau_{\ell}+\delta^{+}_{\ell} ) \geq
\dfrac{{c}_{\ell}r}{2}-M_{\ell}\delta^{+}_{\ell}
+ \beta_{\ell}\nu_{\ell}\int_{\tau_{\ell}}^{\tau_{\ell}+\delta^{+}_{\ell}}\biggl{(}\int_{\tau_{\ell}}^{s} b_{\ell}(\xi)~\!d\xi \biggr{)} ~\!ds.
\end{equation*}
This gives a contradiction if $\beta_{\ell}$ is sufficiently large, say
\begin{equation*}
\beta_{\ell} > \beta_{\ell}^{+} := \dfrac{R + M_{\ell} L}{\nu_{\ell} \int_{\tau_{\ell}}^{\tau_{\ell}+\delta^{+}_{\ell}}\int_{\tau_{\ell}}^{s} b_{\ell}(\xi)~\!d\xi ~\!ds},
\end{equation*}
recalling that $\int_{\tau_{\ell}}^{x} b_{\ell}(\xi) ~\!d\xi > 0$ for each $x\in \mathopen{]}\tau_{\ell},\sigma_{k+1}\mathclose{]}$.

A similar argument (with obvious modifications) applies if the second inequality in \eqref{eq-ineq} is true.
If $\sigma_{\ell} = 0$, then $\beta>0$ in the boundary conditions (otherwise $u(\sigma_{\ell})=0$, a contradiction) and we deal with the first inequality in \eqref{eq-ineq2} (see the discussion of the first case above).
Consequently, whenever $\sigma_{\ell} > 0$, we can focus our attention
on the left-adjacent interval $J_{\ell-1}$ where $f(x,u(x))=-\beta_{\ell-1}b_{\ell-1}(x)k_{\ell-1}(u(x))\leq 0$. 
Recall also that, by the convention adopted in defining the intervals $I_{i}$ and $J_{j}$, 
we have that $b_{\ell-1}(x)$ is not identically zero on all left neighborhoods of $\sigma_{\ell}$.

If we define
\begin{equation*}
\delta^{-}_{\ell}  := \min\biggl{\{}\sigma_{\ell}-\tau_{\ell-1},\dfrac{{c}_{\ell}r}{4M_{\ell}}\biggr{\}}>0,
\end{equation*}
we obtain a similar contradiction for
\begin{equation*}
\beta_{\ell} > \beta_{\ell}^{-} := \dfrac{R + M_{\ell} L}{\nu_{\ell} \int_{\sigma_{\ell}-\delta^{-}_{\ell}}^{\sigma_{\ell}}\int_{s}^{\sigma_{\ell}} b_{\ell-1}(\xi) ~\!d\xi ~\!ds}.
\end{equation*}

At the end, defining
\begin{equation*}
\beta^{*}:= \max_{k=1,\ldots,m} \beta^{\pm}_{\ell},
\end{equation*}
condition \ref{Star} holds taking $\beta_{j}>\beta^{*}$, for all $j=0,\ldots,m+1$.
Finally, we can apply Theorem~\ref{Th-Multiplicity} and the proof is completed.
\end{proof}

From the statement of Theorem~\ref{main-theorem}, one can easily notice that the parameters $\alpha_{i}>0$ are involved only in hypotheses \eqref{hp_A} and \eqref{hp_B},
therefore there is no real condition on those constants (since they can be considered as part of the functions $g_{i}$).
In a moment, the role of the parameters $\alpha_{i}$ will become more clear. Indeed, investigating more on conditions \eqref{hp_A} and \eqref{hp_B},
we can state the following corollaries (the obvious proofs are omitted). 

\begin{corollary}\label{cor-4.1}
Let $m\geq 1$ be an integer. Let $a_{i}\colon \mathopen{[}0,L\mathclose{]} \to \mathbb{R}^{+}$, for $i=1,\ldots,m$, 
and $b_{j} \colon \mathopen{[}0,L\mathclose{]} \to \mathbb{R}^{+}$, for $j=0,\ldots,m+1$, be Lebesgue integrable functions satisfying $(h_{1})$.
Let $g_{i}\colon \mathbb{R}^{+} \to \mathbb{R}^{+}$, for $i=1,\ldots,m$, 
and $k_{j}\colon \mathbb{R}^{+} \to \mathbb{R}^{+}$, $j=0,\ldots,m+1$, be continuous functions satisfying $(h_{2})$ and $(h_{3})$.
Moreover, suppose that 
\begin{equation*}
g_{0}^{i}=0, \quad \text{for all } \; i=1,\ldots,m.
\end{equation*}
Then there exists $\alpha^{*}>0$ such that if
\begin{equation*}
\alpha_{i} > \alpha^{*}, \quad \text{for all } \; i=1,\ldots,m,
\end{equation*}
there exists $\beta^{*}=\beta^{*}(\alpha_{1},\ldots,\alpha_{m})>0$ so that, if
\begin{equation*}
\beta_{j}>\beta^{*}, \quad \text{for all } \; j=0,\ldots,m+1,
\end{equation*}
then the boundary value problem \eqref{BVP} with $f(x,s)$ defined in \eqref{def_f}
has at least $2^{m}-1$ positive solutions.
\end{corollary}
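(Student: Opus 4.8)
The plan is to deduce the statement directly from Theorem~\ref{main-theorem} by checking that its two quantitative hypotheses \eqref{hp_A} and \eqref{hp_B} can both be arranged once the parameters $\alpha_i$ are taken sufficiently large, and by then invoking the theorem to produce the threshold $\beta^*$. First I would observe that the extra assumption $g_0^i = 0$ for all $i$ trivializes \eqref{hp_A}: since $\lambda_0 > 0$, one has $\alpha_i g_0^i = 0 < \lambda_0$ for every $\alpha_i > 0$, so this condition places no restriction whatsoever on the $\alpha_i$.

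It remains to secure \eqref{hp_B}. Hypothesis $(h_3)$ guarantees $g_\infty^i > 0$ for each $i$ (possibly $+\infty$), so the inequality $\alpha_i g_\infty^i > \lambda_1^i$ holds as soon as $\alpha_i > \lambda_1^i / g_\infty^i$, with the convention $\lambda_1^i/(+\infty) = 0$. The crucial point, which rules out any circularity, is that both $\lambda_0$ and the eigenvalues $\lambda_1^i$ are defined through the weights $\sum_{i} a_i$ and $a_i$ alone, hence are independent of the parameters $\alpha_i$. Consequently the quantity
\[
\alpha^* := \max\Bigl\{ 1, \, \max_{i=1,\ldots,m} \frac{\lambda_1^i}{g_\infty^i} \Bigr\} > 0
\]
is well defined and depends only on the weights $a_i$ and the nonlinearities $g_i$; taking $\alpha_i > \alpha^*$ for all $i$ then forces \eqref{hp_B}.

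With $\alpha_1, \ldots, \alpha_m$ so fixed, both \eqref{hp_A} and \eqref{hp_B} hold, so Theorem~\ref{main-theorem} applies and yields a threshold $\beta^* = \beta^*(\alpha_1, \ldots, \alpha_m) > 0$ (an inspection of its proof shows that $\beta^*$ is assembled from the constants $M_\ell$, $\nu_\ell$, $c_\ell$ and the a priori bound $R$, all of which depend on the now-fixed $\alpha_i$) such that $\beta_j > \beta^*$ for every $j$ guarantees at least $2^m - 1$ positive solutions. Since the reduction is entirely elementary, I do not anticipate a genuine obstacle; the only step deserving attention is confirming that $\alpha^*$ may be chosen independently of the $\alpha_i$, which is precisely the observation that the relevant eigenvalues do not involve these parameters.
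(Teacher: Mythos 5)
Your proposal is correct and is precisely the deduction the paper has in mind: the paper explicitly omits the proofs of Corollaries~\ref{cor-4.1} and~\ref{cor-4.2} as ``obvious,'' and the intended argument is exactly your reduction to Theorem~\ref{main-theorem}, noting that $g_{0}^{i}=0$ makes \eqref{hp_A} automatic and that $\lambda_{1}^{i}$ and $g_{\infty}^{i}$ do not depend on $\alpha_{i}$, so $\alpha^{*}:=\max\{1,\max_{i}\lambda_{1}^{i}/g_{\infty}^{i}\}$ secures \eqref{hp_B}. Your remark that the eigenvalues are built from the weights $a_{i}$ alone, ruling out circularity, is the one point worth making explicit, and you made it.
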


\begin{corollary}\label{cor-4.2}
Let $m\geq 1$ be an integer. Let $a_{i}\colon \mathopen{[}0,L\mathclose{]} \to \mathbb{R}^{+}$, for $i=1,\ldots,m$, 
and $b_{j} \colon \mathopen{[}0,L\mathclose{]} \to \mathbb{R}^{+}$, for $j=0,\ldots,m+1$, be Lebesgue integrable functions satisfying $(h_{1})$.
Let $g_{i}\colon \mathbb{R}^{+} \to \mathbb{R}^{+}$, for $i=1,\ldots,m$, 
and $k_{j}\colon \mathbb{R}^{+} \to \mathbb{R}^{+}$, $j=0,\ldots,m+1$, be continuous functions satisfying $(h_{2})$ and $(h_{3})$.
Moreover, suppose that 
\begin{equation*}
g_{\infty}^{i} = +\infty, \quad \text{for all } \; i=1,\ldots,m.
\end{equation*}
Then there exists $\alpha_{*}>0$ such that if
\begin{equation*}
0 < \alpha_{i} < \alpha_{*}, \quad \text{for all } \; i=1,\ldots,m,
\end{equation*}
there exists $\beta^{*}=\beta^{*}(\alpha_{1},\ldots,\alpha_{m})>0$ so that, if
\begin{equation*}
\beta_{j}>\beta^{*}, \quad \text{for all } \; j=0,\ldots,m+1,
\end{equation*}
then the boundary value problem \eqref{BVP} with $f(x,s)$ defined in \eqref{def_f}
has at least $2^{m}-1$ positive solutions.
\end{corollary}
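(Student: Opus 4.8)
The plan is to obtain Corollary~\ref{cor-4.2} as a direct specialization of Theorem~\ref{main-theorem}: all the structural assumptions $(h_{1})$, $(h_{2})$, $(h_{3})$ are already in force, so the only task is to verify the two quantitative conditions \eqref{hp_A} and \eqref{hp_B} under the extra hypothesis $g_{\infty}^{i}=+\infty$, and then to select $\alpha_{*}$ accordingly.

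First I would dispose of \eqref{hp_B}. Since $g_{\infty}^{i}=+\infty$ and every admissible $\alpha_{i}$ is strictly positive, the product $\alpha_{i}g_{\infty}^{i}$ equals $+\infty$ and hence exceeds the finite eigenvalue $\lambda_{1}^{i}$ for \emph{every} choice of $\alpha_{i}>0$. Thus \eqref{hp_B} holds automatically and imposes no restriction on the parameters; this is precisely the effect of assuming superlinearity of the $g_{i}$ at infinity. Next I would choose $\alpha_{*}$ so as to enforce \eqref{hp_A}. By $(h_{3})$ each limit $g_{0}^{i}$ is finite, so I set $\Gamma:=\max_{i=1,\ldots,m}g_{0}^{i}\in\mathopen{[}0,+\infty\mathclose{[}$. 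If $\Gamma>0$ I take $\alpha_{*}:=\lambda_{0}/\Gamma$, while if $\Gamma=0$ I let $\alpha_{*}$ be an arbitrary positive number. In either case, for $0<\alpha_{i}<\alpha_{*}$ one gets $\alpha_{i}g_{0}^{i}\leq\alpha_{i}\Gamma<\lambda_{0}$, so \eqref{hp_A} is satisfied for all $i=1,\ldots,m$.

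With $\alpha_{i}\in\mathopen{]}0,\alpha_{*}\mathclose{[}$ now fixed, both \eqref{hp_A} and \eqref{hp_B} hold, and I would simply invoke Theorem~\ref{main-theorem}: it produces a threshold $\beta^{*}>0$ such that the boundary value problem \eqref{BVP} with $f$ as in \eqref{def_f} admits at least $2^{m}-1$ positive solutions whenever $\beta_{j}>\beta^{*}$ for all $j=0,\ldots,m+1$. Inspecting the construction of $\beta^{*}$ in that proof (it is built from the constants $r$, $c_{\ell}$, $M_{\ell}$, $\nu_{\ell}$ entering the quantities $\beta^{\pm}_{\ell}$), one sees that $\beta^{*}$ depends on the chosen values $\alpha_{1},\ldots,\alpha_{m}$, which is exactly the dependence $\beta^{*}=\beta^{*}(\alpha_{1},\ldots,\alpha_{m})$ recorded in the statement.

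The argument is essentially bookkeeping, and the closest thing to an obstacle is entirely formal: one must handle the degenerate case $g_{0}^{i}=0$, in which $\lambda_{0}/g_{0}^{i}$ is undefined and $\alpha_{*}$ must be assigned by hand, and one must read the extended-real inequality $\alpha_{i}g_{\infty}^{i}=+\infty>\lambda_{1}^{i}$ correctly so that \eqref{hp_B} is genuinely vacuous. No analytic difficulty arises, since all of the topological degree computations are already carried out in Theorem~\ref{main-theorem}; this is why the proof can be omitted as routine.
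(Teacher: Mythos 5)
Your proposal is correct and matches the paper's intent exactly: the author omits the proof as ``obvious'' precisely because the corollary is the specialization of Theorem~\ref{main-theorem} you describe, with $g_{\infty}^{i}=+\infty$ rendering \eqref{hp_B} vacuous for any $\alpha_{i}>0$ and the choice $\alpha_{*}=\lambda_{0}/\max_{i}g_{0}^{i}$ (or arbitrary when that maximum vanishes) securing \eqref{hp_A}. Your handling of the degenerate case $g_{0}^{i}=0$ and the observation that $\beta^{*}$ inherits its dependence on $\alpha_{1},\ldots,\alpha_{m}$ from the constants $\beta^{\pm}_{\ell}$ in the proof of Theorem~\ref{main-theorem} are both accurate.
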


\section{Positive radial solutions to elliptic BVPs}\label{section-5}

As a standard consequence of Theorem~\ref{main-theorem}, we can give a multiplicity result for positive radially symmetric
solutions to boundary value problems associated with elliptic PDEs on an annular domain.

We briefly describe the setting, referring to the notation introduced in Section~\ref{section-2}.
Let $0 < R_{1} < R_{2}$ and consider the open annulus around the origin
\begin{equation*}
\Omega := \bigl{\{}x\in {\mathbb{R}}^{N} \colon R_{1} < \|x\| < R_{2}\bigr{\}},
\end{equation*}
where $\|\cdot\|$ is the Euclidean norm in $\mathbb{R}^{N}$ (for $N \geq 2$).
We define
\begin{equation*}
\mathcal{F}(x,s):= \sum_{i=1}^{m} \alpha_{i} \mathcal{A}_{i}(x)g_{i}(s) - \sum_{j=0}^{m+1} \beta_{j} \mathcal{B}_{j}(x)k_{j}(s), \quad x\in\overline{\Omega}, \; s\in\mathbb{R}^{+},
\end{equation*}
with $m\geq1$.
For $i=1,\ldots,m$ and $j=0,1,\ldots,m+1$, let $\alpha_{i}>0$, $\beta_{j}>0$, and moreover let $g_{i}\colon \mathbb{R}^{+} \to \mathbb{R}^{+}$ and 
$k_{j}\colon \mathbb{R}^{+} \to \mathbb{R}^{+}$ be continuous functions satisfying conditions $(h_{2})$ and $(h_{3})$.
Let $\mathcal{A}_{i}\colon \overline{\Omega} \to \mathbb{R}^{+}$, for $i=1,\ldots,m$, and $\mathcal{B}_{j} \colon \overline{\Omega} \to \mathbb{R}^{+}$, for $j=0,1,\ldots,m+1$.

We deal with the Dirichlet boundary value problem associated with an elliptic partial differential equation
\begin{equation}\label{eq-pde-rad}
\begin{cases}
\, -\Delta \,u = \mathcal{F}(x,u) & \text{in } \Omega \\
\, u = 0 & \text{on } \partial\Omega.
\end{cases}
\end{equation}
For simplicity, we look for classical solutions to \eqref{eq-pde-rad}, namely, $u \in \mathcal{C}^2(\overline{\Omega})$.
Accordingly, we assume that $\mathcal{A}_{i}(x)$ and $\mathcal{B}_{j}(x)$ are \textit{continuous} functions.
Moreover, we suppose that $\mathcal{A}_{i}(x)$ and $\mathcal{B}_{j}(x)$ are radially symmetric function,
i.e.~there exist continuous functions $A_{i},B_{j} \colon \mathopen{[}R_{1},R_{2}\mathclose{]} \to \mathbb{R}^{+}$ such that
\begin{equation}\label{AB}
\mathcal{A}_{i}(x) = A_{i}(\|x\|), \quad \mathcal{B}_{i}(x) = B_{i}(\|x\|), \quad \forall \, x \in \overline{\Omega}.
\end{equation}
In this way, we can transform the partial differential equation in \eqref{eq-pde-rad} into a second order ordinary differential equation
as the one in \eqref{BVP}, as follows.

Preliminarily, we introduce the function
\begin{equation*}
F(r,s):= \sum_{i=1}^{m} \alpha_{i} A_{i}(r)g_{i}(s) - \sum_{j=0}^{m+1} \beta_{j} B_{j}(r)k_{j}(s), \quad r\in\mathopen{[}R_{1},R_{2}\mathclose{]}, \; s\in\mathbb{R}^{+}.
\end{equation*}
A radially symmetric (classical) solutions to \eqref{eq-pde-rad} is a solution of the
form $u(x) = \mathcal{U}(\|x\|)$, where $\mathcal{U}(r)$ is a scalar function defined on $\mathopen{[}R_{1},R_{2}\mathclose{]}$.
Consequently, we can convert \eqref{eq-pde-rad} into
\begin{equation}\label{eq-rad}
\begin{cases}
\, \bigl{(}r^{N-1}\, \mathcal{U}'\bigr{)}' + r^{N-1} F(r,\mathcal{U}) = 0 \\
\, \mathcal{U}(R_{1}) = \mathcal{U}(R_{2}) = 0.
\end{cases}
\end{equation}
Via the change of variable
\begin{equation*}
t = h(r):= \int_{R_{1}}^{r} \xi^{1-N} ~\!d\xi
\end{equation*}
and the positions
\begin{equation*}
L:= \int_{R_{1}}^{R_{2}} \xi^{1-N} ~\!d\xi, \quad r(t):= h^{-1}(t), \quad v(t)={\mathcal{U}}(r(t)),
\end{equation*}
we can transform \eqref{eq-rad} into the Dirichlet problem
\begin{equation*}
\begin{cases}
\, v'' + f(t,v) = 0 \\
\, v(0) = v(L) = 0,
\end{cases}
\end{equation*}
where
\begin{equation*}
f(t,v):= r(t)^{2(N-1)}F(r(t),v), \quad t \in \mathopen{[}0,T\mathclose{]}, \; v\in\mathbb{R}^{+}.
\end{equation*}

In this setting, a straightforward consequence of Theorem~\ref{main-theorem} is the following result.
In the statement below, when we introduce condition $(h_{1}^{*})$ and the points $\sigma_{i}$ and $\tau_{i}$, 
we implicitly assume the convention adopted in defining the intervals $I_{i}$ and $J_{j}$ in Section~\ref{section-2}.

\begin{theorem}\label{th-rad}
Let $m\geq 1$ be an integer. Let $\mathcal{A}_{i}\colon \overline{\Omega} \to \mathbb{R}^{+}$, for $i=1,\ldots,m$, 
and $\mathcal{B}_{j}\colon \overline{\Omega} \to \mathbb{R}^{+}$, for $j=0,\ldots,m+1$, be Lebesgue integrable functions satisfying
the following condition:
\begin{itemize}
\item [$(h_{1}^{*})$]
\textit{there exist $2m + 2$ points (with $m \geq 1$)
\begin{equation*}
R_{1} = \tau_{0} \leq \sigma_{1} < \tau_{1} < \sigma_{2} < \ldots  < \tau_{m-1} < \sigma_{m} < \tau_{m} \leq \sigma_{m+1} = R_{2},
\end{equation*}
such that $A_{i}\not\equiv 0$ on $\mathopen{[}\sigma_{i},\tau_{i}\mathclose{]}$, for $i=1,\ldots,m$,
and $B_{i}\not\equiv 0$ on $\mathopen{[}\tau_{i},\sigma_{i+1}\mathclose{]}$, for $j=1,\ldots,m+1$,}
\end{itemize}
where $A_{i},B_{j} \colon \overline{\Omega} \to \mathbb{R}^{+}$ are defined as in \eqref{AB}.
Let $\alpha_{i}>0$, for all $i=1,\ldots,m$.
Let $g_{i}\colon \mathbb{R}^{+} \to \mathbb{R}^{+}$, for $i=1,\ldots,m$, 
and $k_{j}\colon \mathbb{R}^{+} \to \mathbb{R}^{+}$, $j=0,\ldots,m+1$, be continuous functions satisfying $(h_{2})$, $(h_{3})$, \eqref{hp_A} and \eqref{hp_B}.
Then there exists $\beta^{*}>0$ such that, if
\begin{equation*}
\beta_{j}>\beta^{*}, \quad \text{for all } \; j=0,\ldots,m+1,
\end{equation*}
the Dirichlet boundary value problem \eqref{eq-pde-rad} has at least $2^{m}-1$ positive radially symmetric
(classical) solutions.
\end{theorem}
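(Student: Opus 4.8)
The plan is to read Theorem~\ref{th-rad} as a direct application of Theorem~\ref{main-theorem} to the one-dimensional Dirichlet problem already obtained from \eqref{eq-pde-rad} by the radial reduction carried out above. First I would record that the substitution $t = h(r) = \int_{R_{1}}^{r}\xi^{1-N}\,d\xi$ is a strictly increasing diffeomorphism of $\mathopen{[}R_{1},R_{2}\mathclose{]}$ onto $\mathopen{[}0,L\mathclose{]}$ (the hypothesis $R_{1}>0$ rules out any singularity), with $h(R_{1})=0$ and $h(R_{2})=L$. Writing $r(t)=h^{-1}(t)$, a function $\mathcal{U}$ solves the singular problem \eqref{eq-rad} if and only if $v(t):=\mathcal{U}(r(t))$ solves
\begin{equation*}
v''+f(t,v)=0, \qquad v(0)=v(L)=0,
\end{equation*}
with $f(t,v)=r(t)^{2(N-1)}F(r(t),v)$. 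Setting $a_{i}(t):=r(t)^{2(N-1)}A_{i}(r(t))$ and $b_{j}(t):=r(t)^{2(N-1)}B_{j}(r(t))$, this $f$ has exactly the form \eqref{def_f}, and, since $x\mapsto u(x)=\mathcal{U}(\|x\|)$ sets up a bijection between radial functions and scalar profiles, positive radial solutions of \eqref{eq-pde-rad} correspond one-to-one to positive solutions of the transformed Dirichlet problem.

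Second, I would verify that the transformed data meet the hypotheses of Theorem~\ref{main-theorem}. The factor $r(t)^{2(N-1)}$ is continuous and bounded between the positive constants $R_{1}^{2(N-1)}$ and $R_{2}^{2(N-1)}$, so multiplying by it neither changes the zero-set of $A_{i}$, $B_{j}$ nor destroys integrability. Putting $I_{i}:=\mathopen{[}h(\sigma_{i}),h(\tau_{i})\mathclose{]}$ and $J_{j}:=\mathopen{[}h(\tau_{j}),h(\sigma_{j+1})\mathclose{]}$, the monotonicity of $h$ turns the ordering and nonvanishing prescribed by $(h_{1}^{*})$ into precisely the interval structure of $(h_{1})$. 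The functions $g_{i}$, $k_{j}$ are untouched by the change of variable, so $(h_{2})$ and $(h_{3})$ hold verbatim.

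Third comes the eigenvalue bookkeeping, which is the only point needing genuine care. The Dirichlet case corresponds to $\alpha=\gamma=1$, $\beta=\delta=0$ in \eqref{BVP}. A computation with $\psi(t)=\varphi(r(t))$ shows that $r^{N-1}\varphi'=\psi'$ and $(r^{N-1}\varphi')'=\psi''\,r^{1-N}$, so under $t=h(r)$ the singular radial eigenvalue problems $(r^{N-1}\varphi')'+\lambda\,r^{N-1}[\sum_{i}A_{i}]\varphi=0$ and $(r^{N-1}\varphi')'+\lambda\,r^{N-1}A_{i}\varphi=0$ (with Dirichlet conditions) are carried \emph{exactly} into the regular problems $\varphi''+\lambda[\sum_{i}a_{i}]\varphi=0$ and $\varphi''+\lambda a_{i}\varphi=0$ that define $\lambda_{0}$ and $\lambda_{1}^{i}$ in Section~\ref{section-2}; in particular the two families share the same first eigenvalues. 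Consequently the conditions \eqref{hp_A} and \eqref{hp_B} assumed in the statement are exactly the hypotheses of Theorem~\ref{main-theorem} for the transformed problem, which therefore yields $\beta^{*}>0$ such that $\beta_{j}>\beta^{*}$ for all $j$ forces at least $2^{m}-1$ positive solutions $v$ of the Dirichlet problem.

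Finally I would transfer the conclusion back. Each such $v$ gives $\mathcal{U}=v\circ h$ and $u(x)=\mathcal{U}(\|x\|)$, and these are pairwise distinct positive radial functions. For the regularity, continuity of $\mathcal{A}_{i}$, $\mathcal{B}_{j}$ (hence of $f$) together with the equation $v''=-f(t,v)$ forces $v\in\mathcal{C}^{2}$; undoing the smooth change of variable gives $\mathcal{U}\in\mathcal{C}^{2}(\mathopen{[}R_{1},R_{2}\mathclose{]})$, and since $R_{1}>0$ keeps us away from the origin, $u\in\mathcal{C}^{2}(\overline{\Omega})$ is a classical solution of \eqref{eq-pde-rad}. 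I expect no real obstacle beyond this bookkeeping: the substance of the argument is entirely contained in Theorem~\ref{main-theorem}, and the only thing to check honestly is that the radial-to-planar eigenvalue correspondence matches the two sets of hypotheses.
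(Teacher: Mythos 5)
Your proposal is correct and follows exactly the route the paper intends: the paper presents the radial reduction $t=h(r)=\int_{R_1}^{r}\xi^{1-N}\,d\xi$ before the statement and then declares Theorem~\ref{th-rad} a ``straightforward consequence'' of Theorem~\ref{main-theorem}, which is precisely the reduction-plus-verification you carry out. Your explicit check that the singular radial eigenvalue problems transform into the regular ones defining $\lambda_{0}$ and $\lambda_{1}^{i}$ (so that \eqref{hp_A} and \eqref{hp_B} transfer verbatim) is a detail the paper leaves implicit, and it is done correctly.
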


Clearly, from Corollary~\ref{cor-4.1} and Corollary~\ref{cor-4.2} we also derive the following result.

\begin{corollary}\label{cor-5.1}
Let $m\geq 1$ be an integer. Let $\mathcal{A}_{i}\colon \overline{\Omega} \to \mathbb{R}^{+}$, for $i=1,\ldots,m$, 
and $\mathcal{B}_{j}\colon \overline{\Omega} \to \mathbb{R}^{+}$, for $j=0,\ldots,m+1$, be Lebesgue integrable functions satisfying $(h_{1}^{*})$.
Let $g_{i}\colon \mathbb{R}^{+} \to \mathbb{R}^{+}$, for $i=1,\ldots,m$, 
and $k_{j}\colon \mathbb{R}^{+} \to \mathbb{R}^{+}$, $j=0,\ldots,m+1$, be continuous functions satisfying $(h_{2})$ and $(h_{3})$.
\begin{itemize}
\item If
\begin{equation*}
g_{0}^{i}=0, \quad \text{for all } \; i=1,\ldots,m.
\end{equation*}
Then there exists $\alpha^{*}>0$ such that if
\begin{equation*}
\alpha_{i} > \alpha^{*}, \quad \text{for all } \; i=1,\ldots,m,
\end{equation*}
there exists $\beta^{*}=\beta^{*}(\alpha_{1},\ldots,\alpha_{m})>0$ so that, if
\begin{equation*}
\beta_{j}>\beta^{*}, \quad \text{for all } \; j=0,\ldots,m+1,
\end{equation*}
then the Dirichlet boundary value problem \eqref{eq-pde-rad} has at least $2^{m}-1$ positive radially symmetric
(classical) solutions.

\item If
\begin{equation*}
g_{\infty}^{i} = +\infty, \quad \text{for all } \; i=1,\ldots,m.
\end{equation*}
Then there exists $\alpha_{*}>0$ such that if
\begin{equation*}
0 < \alpha_{i} < \alpha_{*}, \quad \text{for all } \; i=1,\ldots,m,
\end{equation*}
there exists $\beta^{*}=\beta^{*}(\alpha_{1},\ldots,\alpha_{m})>0$ so that, if
\begin{equation*}
\beta_{j}>\beta^{*}, \quad \text{for all } \; j=0,\ldots,m+1,
\end{equation*}
then the Dirichlet boundary value problem \eqref{eq-pde-rad} has at least $2^{m}-1$ positive radially symmetric
(classical) solutions.
\end{itemize}
\end{corollary}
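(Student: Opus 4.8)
The plan is to reduce the elliptic Dirichlet problem \eqref{eq-pde-rad} to the one-dimensional boundary value problem already covered by Corollary~\ref{cor-4.1} and Corollary~\ref{cor-4.2}, exploiting the change of variables recalled at the beginning of Section~\ref{section-5}. Since we seek radially symmetric solutions $u(x) = \mathcal{U}(\|x\|)$, the PDE becomes the singular ODE \eqref{eq-rad}, and the substitution $t = h(r) = \int_{R_{1}}^{r} \xi^{1-N}\,d\xi$, $v(t) = \mathcal{U}(r(t))$ transforms it into the Dirichlet problem $v'' + f(t,v) = 0$, $v(0) = v(L) = 0$, with $f(t,v) = r(t)^{2(N-1)} F(r(t),v)$. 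Expanding $F$, I would set
\begin{equation*}
a_{i}(t) := r(t)^{2(N-1)} A_{i}(r(t)), \qquad b_{j}(t) := r(t)^{2(N-1)} B_{j}(r(t)),
\end{equation*}
so that $f$ takes exactly the form \eqref{def_f} on $\mathopen{[}0,L\mathclose{]}$, with the same nonlinearities $g_{i}, k_{j}$ and the same parameters $\alpha_{i}, \beta_{j}$. This is precisely the Dirichlet case of \eqref{BVP} (namely $\alpha = \gamma = 1$, $\beta = \delta = 0$).

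The key step is then to check that the hypotheses of the one-dimensional result carry over. The map $h$ is a strictly increasing homeomorphism of $\mathopen{[}R_{1},R_{2}\mathclose{]}$ onto $\mathopen{[}0,L\mathclose{]}$ (since $\xi^{1-N} > 0$), and $r(t)^{2(N-1)}$ is continuous and strictly positive because $R_{1} > 0$. Consequently, $a_{i}$ and $b_{j}$ are continuous, and the zero/non-zero pattern of $A_{i}, B_{j}$ is preserved: the ordered points $\tau_{i}, \sigma_{i}$ of $(h_{1}^{*})$ map to ordered points $h(\tau_{i}), h(\sigma_{i})$ in $\mathopen{[}0,L\mathclose{]}$, and multiplication by the positive factor $r(t)^{2(N-1)}$ does not alter the supports, so $(h_{1})$ holds. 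Since the nonlinearities $g_{i}, k_{j}$ are untouched by the transformation, conditions $(h_{2})$ and $(h_{3})$, together with the limit quantities $g_{0}^{i}, g_{\infty}^{i}, k_{0}^{j}$, transfer verbatim.

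With the ODE problem in the form required, I would conclude by a direct application of the relevant corollary: under $g_{0}^{i} = 0$ the first alternative of Corollary~\ref{cor-4.1} yields, for $\alpha_{i}$ large, a threshold $\beta^{*}$ beyond which the transformed Dirichlet problem has at least $2^{m}-1$ positive solutions; under $g_{\infty}^{i} = +\infty$ the same follows from Corollary~\ref{cor-4.2} for $\alpha_{i}$ small. Each positive solution $v(t)$ corresponds, via $\mathcal{U}(r) = v(h(r))$ and $u(x) = \mathcal{U}(\|x\|)$, to a positive radially symmetric classical solution of \eqref{eq-pde-rad}, the $\mathcal{C}^{2}$ regularity following from the continuity of all the data. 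I expect the only delicate point to be the bookkeeping of this correspondence, i.e.~verifying that the interval structure and positivity are preserved under the singular change of variables; no genuine obstacle arises, which is exactly why the statement is a straightforward consequence of the one-dimensional theory.
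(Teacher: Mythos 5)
Your proposal is correct and follows exactly the route the paper intends: the radial reduction of Section~\ref{section-5} turns \eqref{eq-pde-rad} into the Dirichlet case of \eqref{BVP} with weights $a_{i}(t)=r(t)^{2(N-1)}A_{i}(r(t))$, $b_{j}(t)=r(t)^{2(N-1)}B_{j}(r(t))$ preserving $(h_{1})$--$(h_{3})$, after which Corollary~\ref{cor-4.1} and Corollary~\ref{cor-4.2} apply directly (the paper states this as an immediate consequence and omits the details). Your verification that the change of variables preserves the support structure and the limit quantities is precisely the bookkeeping the paper leaves implicit.
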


We conclude this discussion by observing that the multiplicity results given in Theorem~\ref{th-rad} and in its corollary are also valid considering different boundary conditions of the form
\begin{equation*}
u=0 \; \text{ on } \bigl{\{}x\in {\mathbb{R}}^{N} \colon \|x\| = R_{1}\bigr{\}} 
\quad \text{ and } \quad
\dfrac{\partial u}{\partial r}  =0 \; \text{ on } \bigl{\{}x\in {\mathbb{R}}^{N} \colon \|x\| = R_{2}\bigr{\}},
\end{equation*}
or
\begin{equation*}
\dfrac{\partial u}{\partial r} =0 \; \text{ on } \bigl{\{}x\in {\mathbb{R}}^{N} \colon \|x\| = R_{1}\bigr{\}} 
\quad \text{ and } \quad
u=0 \; \text{ on } \bigl{\{}x\in {\mathbb{R}}^{N} \colon \|x\| = R_{2}\bigr{\}},
\end{equation*}
where $r=\|x\|$ and $\partial u / \partial r$ denotes the differentiation in the radial direction
(compare also to \cite{LaWe-98}, where an existence result for positive solutions is given for this type of conditions).

\section{Final remarks}\label{section-6}

In this final section we present some consequences and discussions that naturally arise from our main result and from the topological approach developed in this paper, when compared to the existing literature.

\medskip

As the first point, in order to better explain our contribution to indefinite problems, we compare our main result to the one given in \cite{FeZa-15jde}.
In \cite{FeZa-15jde} the authors presented an application of Theorem~\ref{Th-Multiplicity} (i.e.~\cite[Theorem~4.1]{FeZa-15jde}) to an indefinite equation of the form
\begin{equation}\label{eq-jde}
u'' + a(x)g(u) = 0,
\end{equation}
where $a(x)\geq 0$ on $m$ pairwise disjoint intervals and $a(x)\leq 0$ on the complement in $\mathopen{[}0,L\mathclose{]}$.
According to our notation, setting $a_{i}:=a|_{I_{i}}$ and $b_{j}:=a|_{J_{j}}$,
one can easily see that \cite[Theorem~5.3]{FeZa-15jde} is an immediate consequence of Theorem~\ref{main-theorem}.
Furthermore, we observe that in the special case of \eqref{eq-jde} Theorem~\ref{main-theorem} generalizes \cite[Theorem~5.3]{FeZa-15jde}.
Indeed, in the present paper, the positive part and the negative part of the weight are associated with different nonlinearities, that is $g_{i}(s)$ and $k_{j}(s)$.
This fact allows us to impose growth conditions only on the nonlinearities that have actually a role in the proof. 
More precisely, we assume superlinear growth conditions at zero and at infinity on the nonlinearities $g_{i}(s)$ (that multiply the positive part of the weight),
while there are no growth conditions on the nonlinearities associated with the non-negative part.
Indeed, besides the standard sign condition $(h_{2})$, we assume only that $k^{j}_{0}<+\infty$ (in $(h_{3})$) in order to apply a standard maximum principle.
In Figure~\ref{fig-02} we show an example of equation which does not enter the setting of \cite[Theorem~5.3]{FeZa-15jde}, while it satisfies all the hypotheses of Theorem~\ref{main-theorem}.

\begin{figure}[h!]
\centering
\centering
\begin{tikzpicture}[x=40pt,y=25pt]
\draw (-0.4,0) -- (6,0);
\draw (0,-1.4) -- (0,1.5);
\draw (5.8,0) node[anchor=south] {$x$};
\draw [line width=1pt, color=red] (2,0) sin (2.5,-1) cos (3,0);
\draw [line width=1pt, color=red] (3,0) -- (4,0);
\draw [line width=1pt, color=red] (4,0) sin (4.5,1) cos (5,0);
\draw (0,0) node[anchor=north east] {$0$};
\draw (1,-0.1) node[anchor=north] {$1$};
\draw (2.1,-0.1) node[anchor=north east] {$2$};
\draw (2.9,-0.1) node[anchor=north west] {$3$};
\draw (4,-0.1) node[anchor=north] {$4$};
\draw (5,-0.1) node[anchor=north] {$5$};
\draw [line width=1pt, color=red] (0,1) -- (2,1);
\draw [dashed] (2,-0.1) -- (2,1);
\draw (1,-0.1) -- (1,0);
\draw (2,-0.1) -- (2,0);
\draw (3,-0.1) -- (3,0);
\draw (4,-0.1) -- (4,0);
\draw (5,-0.1) -- (5,0);
\draw (0,1) node[anchor=east] {$1$};
\draw (-0.03,1) -- (0,1);
\draw (0,-1) node[anchor=east] {$-1$};
\draw (-0.03,-1) -- (0,-1);
\end{tikzpicture}
\vspace*{10pt}
\\
\includegraphics[width=0.305\textwidth]{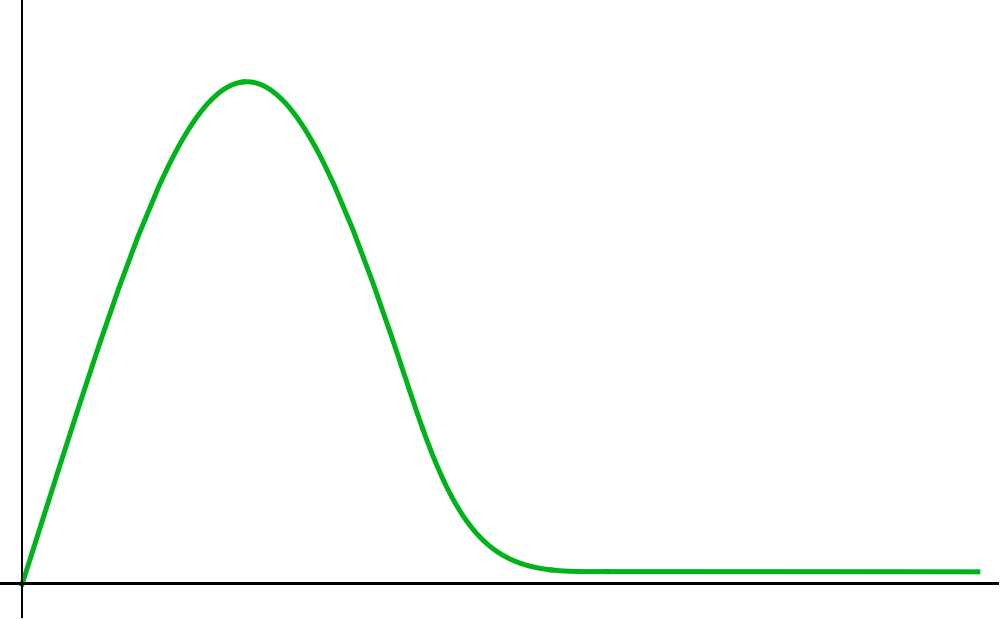} \quad
\includegraphics[width=0.305\textwidth]{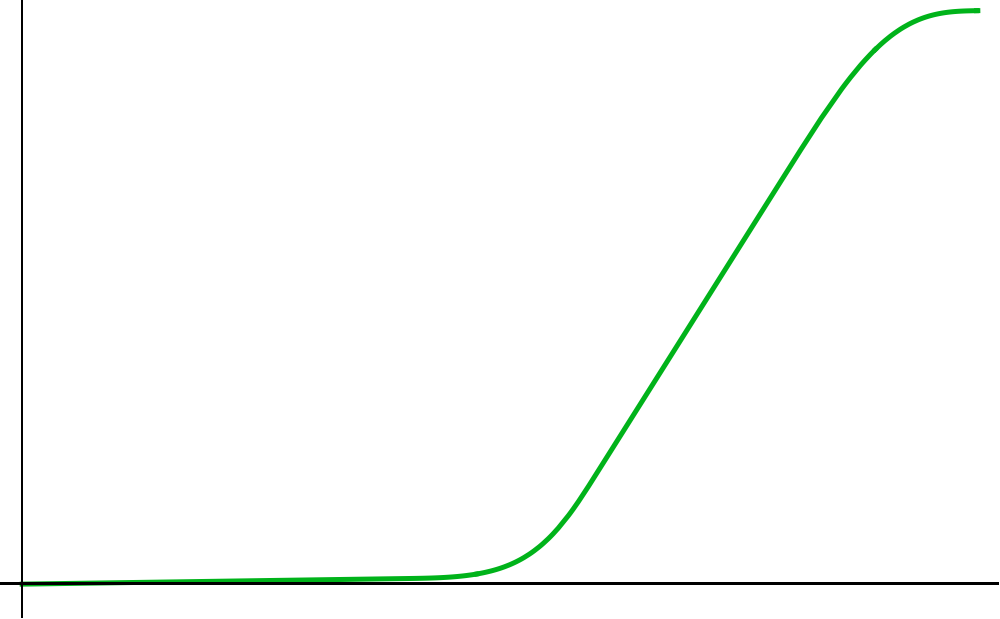} \quad
\includegraphics[width=0.305\textwidth]{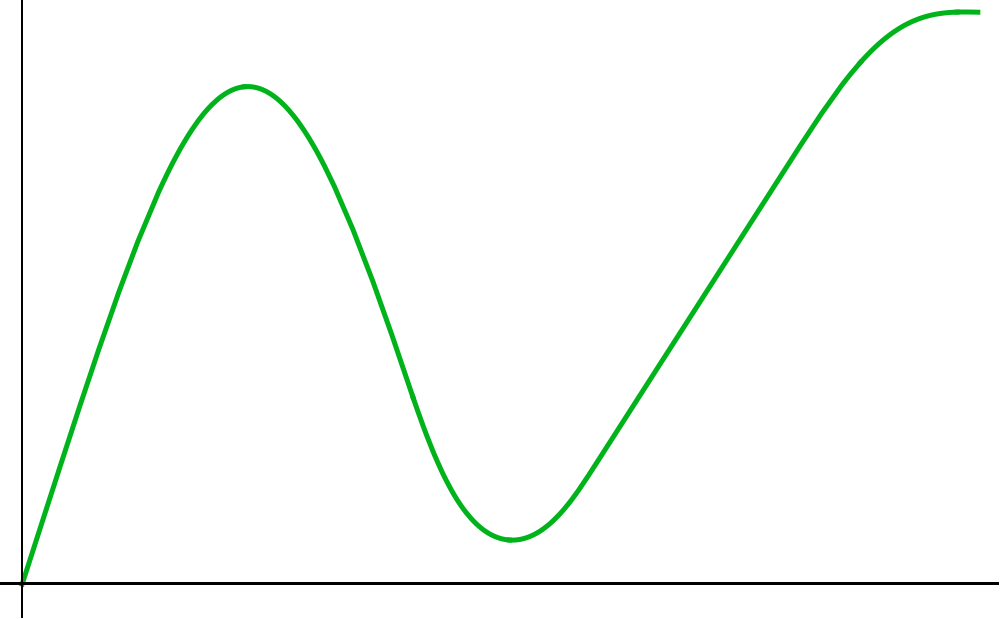}
\caption{\small{The figure shows an example of $3$ positive solutions to the equation $u''+\alpha_{1}a_{1}(x)g_{1}(u)-\beta_{1}b_{1}(x)k_{1}(u)+\alpha_{2}a_{2}(x)g_{2}(u)=0$ on $\mathopen{[}0,5\mathclose{]}$ with $u(0)=u'(5)=0$, whose graphs are located in the lower part of the figure.
For this simulation we have chosen $\alpha_{1}=10$, $\alpha_{2}=2$, $\beta_{1}=20$ and the weight functions as in the upper part of the figure, that is 
$a_{1}(x)=1$ in $\mathopen{[}0,2\mathclose{]}$, $-b_{1}(x)=-\sin (\pi x)$ in $\mathopen{[}2,3\mathclose{]}$, $a_{2}(x)=0$ in $\mathopen{[}3,4\mathclose{]}$,
$a_{2}(x)=-\sin (\pi x)$ in $\mathopen{[}4,5\mathclose{]}$.
Moreover, we have taken $g_{1}(s)=g_{2}(s)=s\arctan(s)$ and $k_{1}(s)=s/(1+s^{2})$ (for $s>0$).
Notice that $k_{1}(s)$ has not a superlinear behavior, since $\lim_{s\to 0^{+}}k_{1}(s)/s=1>0$ and $\lim_{s\to +\infty}k_{1}(s)/s=0$.
Then \cite[Theorem~5.3]{FeZa-15jde} does not apply, contrary to Theorem~\ref{main-theorem}.
}}
\label{fig-02}
\end{figure}

\medskip

One of the advantages in using an approach based on the topological degree is the fact that the degree
is stable with respect to small perturbation of the operator and hence our multiplicity result is valid also when we consider an equation of the form
\begin{equation*}
u'' + \varepsilon p(t,u,u') + f(x,u) = 0,
\end{equation*}
for $|\varepsilon|$ sufficiently small.

From this remark we immediately obtain that we can deal with the equation
\begin{equation*}
u'' + \lambda u + f(x,u) = 0
\end{equation*}
for $|\lambda|$ small enough and thus providing a contribution to \cite{AlTa-96} (compare to the discussion in the introduction).
Moreover, we can consider the Sturm-Liouville problem associated with
\begin{equation}\label{eq-cu'}
u'' + cu' + f(x,u) = 0,
\end{equation}
where $c\in\mathbb{R}$ is a constant, with $|c|$ small enough.
The above equation has no Hamiltonian structure. An interesting question is whether Theorem~\ref{main-theorem} is still valid for an arbitrary $c\in\mathbb{R}$.
With Dirichlet boundary conditions or mixed boundary conditions of the form $u'(0)=u(T)=0$ or $u(0)=u'(T)=0$, a standard change of variable allows
to reduce equation \eqref{eq-cu'} to an equation of the form considered in this paper; while for the general case of Sturm-Liouville boundary conditions one can adapt the approach developed in \cite{FeZa-15ade,FeZa-pp2015} (introduced for Neumann and periodic problems).
In \cite{FeZa-15ade,FeZa-pp2015} the authors used suitable monotonicity properties of the map $t\mapsto e^{ct}u'(t)$ that replace the convexity/concavity of the solutions of \eqref{BVP}.
However, in order to avoid unnecessary technicalities, we prefer to skip further investigations in this direction.

\medskip

Another question that naturally arise is whether we can consider other boundary condition, as the Neumann and periodic ones.
In the case of Neumann and periodic boundary conditions the linear differential operator $u\mapsto -u''$ has a nontrivial kernel made up of the constant functions. Then the operator is not invertible and we cannot proceed as explained in Section~\ref{section-2} defining an equivalent fixed point problem by means of the Green function.
A first possibility is that of perturbing the linear differential operator to a new one which is invertible and next one have to recover the original equation via a limiting process and some careful estimates on the solutions. A second possibility, described in \cite{FeZa-15ade,FeZa-pp2015}, is to apply the coincidence degree
theory developed by J.~Mawhin, which allows to study equations of the form
$Lu = Nu$, where $L$ is a linear operator with nontrivial kernel and $N$ is a
nonlinear one.

\medskip

A procedure analogous to the one described in this paper can be used to prove multiplicity results for solution to \eqref{BVP}, when roughly speaking $s\mapsto f(x,s)$ has a superlinear growth at zero and a \textit{sublinear} growth at infinity.
In this \textit{super-sublinear} case, following the theory developed in \cite{BoFeZa-17tams}, hypotheses \eqref{hp_A} and \eqref{hp_B} of Theorem~\ref{main-theorem} are replaced by
\begin{equation*}
g_{0}^{i} = g_{\infty}^{i} = 0, \quad \text{for all } \; i=1,\ldots,m.
\end{equation*}
It seems probable to prove the existence of $3^{m}-1$ positive solutions when $\alpha_{i}$ and $\beta_{j}$ are sufficiently large.

\section*{Acknowledgments}

This work benefited from fruitful discussions, helpful suggestions and encouragement of Professor Fabio Zanolin.

\bibliographystyle{elsart-num-sort}
\bibliography{Feltrin_biblio}

\bigskip
\begin{flushleft}

{\small{\it Preprint}}

{\small{\it July 2016}}

\end{flushleft}

\end{document}